\def\complex{ {\mathbb C}}
\newtheorem{cor}{Corollary}
\newtheorem{lem}{Lemma}
\newtheorem{remark}{Remark}
\DeclareFontFamily{OT1}{pzc}{}
\DeclareFontShape{OT1}{pzc}{m}{it}{<-> s * [1.10] pzcmi7t}{}
\DeclareMathAlphabet{\mathpzc}{OT1}{pzc}{m}{it}
\begin{document}

\thispagestyle{empty}
\bibliographystyle{siam}

\title{On the limiting behavior of parameter-dependent network centrality measures}

\author {
Michele Benzi\thanks{Department of Mathematics and Computer
Science, Emory University, Atlanta, Georgia 30322, USA
(benzi@mathcs.emory.edu). The work of this author was supported
by National Science Foundation grants
DMS1115692 and DMS-1418889.} \and
Christine Klymko\thanks{Center
for Applied Scientific Computing, Lawrence Livermore
National Laboratory, Livermore, CA 94550, USA 
(klymko1@llnl.gov). This work was performed under the auspices of the 
U.S.~Department of
Energy by Lawrence Livermore National Laboratory under Contract
DE-AC52-07NA27344.
}
}

\maketitle

\markboth{{\sc Michele Benzi and Christine Klymko}}
{Parameter-dependent centrality measures}

\begin{abstract}
We consider a broad class of walk-based, parameterized node centrality measures 
for network analysis. These measures are expressed in terms of functions
of the adjacency matrix and generalize various
well-known centrality indices, including Katz and subgraph centrality. 
We show that the parameter can be ``tuned"
to interpolate between degree and eigenvector centrality, which
appear as limiting cases. Our analysis helps explain certain
correlations often observed between the rankings obtained using
different centrality measures, and provides some guidance
for the tuning of parameters.  We also highlight the 
roles played by the spectral gap of the adjacency matrix and
by the number of triangles in the network.
Our analysis covers both undirected and directed networks, 
including weighted ones. A brief discussion of PageRank is also given.
\end{abstract}

\begin{keywords} 
centrality, communicability, adjacency matrix, 
spectral gap, matrix functions, network
analysis, PageRank
\end{keywords}

\begin{AMS}
05C50, 15A16
\end{AMS}

\section{Introduction}
The mathematical and computational study of complex networks
has experienced tremendous growth in recent years. A wide
variety of highly interconnected systems, both in nature
and in the man-made world of technology, can be modeled
in terms of networks. 
Network models
are now commonplace not only in the ``hard" sciences but
also in economics, finance, anthropology, urban studies, and even in the
humanities. As more and more data has become available, 
the need for tools to
analyze these networks has increased and a new field of
Network Science has come of age 
\cite{Linked,CH10,Ebook11,New10}.

Since graphs, which are abstract 
models of real-world networks, can be described in terms
of matrices, it comes as no surprise that linear algebra
plays an important role in network analysis. Many problems in
this area require the solution of linear systems, the
computation of eigenvalues and eigenvectors, and the evaluation
of matrix functions. Also, the study of dynamical processes on graphs
gives rise to systems of differential and 
difference equations posed on graphs \cite{BBV08}; the behavior of the
solution as a function of time is strongly influenced by the
structure (topology) of the underlying graph, which in turn
is reflected in the spectral properties of matrices associated
with the graph \cite{Bocca06}.

One of the most basic questions 
about network structure is the identification of
the ``important'' nodes in a network.  Examples include essential
proteins in Protein-Protein Interaction Networks, keystone species in
ecological networks, authoritative web pages 
on the World Wide Web, influential
authors in scientific collaboration networks, leading actors in
the Internet Movie Database, and so forth; see, e.g.,
\cite{Ebook11,New03} for details and many additional examples.
When the network 
being examined is very small (say, on the order of 10 nodes), this 
determination of importance can often be done visually, but as networks 
increase in size and complexity, visual analysis becomes impossible.  
Instead, computational 
measures of node importance, called {\em centrality measures}, 
are used to rank the nodes in a network. There are many 
different centrality measures in use; see, for example,
\cite{Brandes,Ebook11,Whos,New10} for extensive treatments of centrality and
discussion of different ranking methods. Many authors, however,
have noted that different centrality measures often provide 
rankings that are highly correlated, at least when
attention is restricted to the most highly ranked nodes; see. e.g., 
\cite{Dingetal,FR,Lee,MiPapa}, as well as 
the results in \cite{BKlym13}.  

In this paper we analyze the relationship between degree centrality, 
eigenvector centrality, and various centrality measures based on the 
diagonal entries (for undirected graphs) and row sums of certain
(analytic) functions of the adjacency matrix of the graph. 
These measures contain as special cases the well-known Katz centrality,
subgraph centrality, total communicability, and other centrality
measures which depend on a tuneable parameter.
We also include a brief discussion of PageRank \cite{Page}.
We point out that Kleinberg's HITS algorithm \cite{HITS},
as a type of eigenvector centrality, is covered by our analysis, as
is the extension of subgraph centrality to digraphs given in
\cite{BEK13}. 

As mentioned, there are a number of other ranking methods 
in use,
yet in this paper we limit ourselves to considering 
centrality measures based on functions of the adjacency
matrix, in addition to degree and eigenvector centrality. 
The choice of which of the many centrality 
measures to study and why is something that must be considered 
carefully; see the discussion in \cite{ChKrLaPe11}.  
In this paper we focus our attention
on centrality measures  that have been widely tested and 
that can be expressed in terms of linear algebra (more
specifically, in terms of the adjacency matrix 
of the network). We additionally 
restrict our scope to centrality measures that we can demonstrate 
(mathematically) to be
related to one other. Hence, we did not include
in our analysis two popular centrality measures,
betweenness centrality \cite{Fr77} and closeness
centrality \cite{Fr79}, which do not appear to
admit a simple expression in terms of the
adjacency matrix. 
Our results help explain the correlations often observed
between the rankings produced by different centrality 
measures, and may be useful in tuning parameters when
performing centrality calculations.

The paper is organized as follows. Sections 2 and 3 contain background
information on graphs and centrality measures.
In section 4 we describe the general class of functional centrality
measures considered in this paper and present some technical lemmas
on power series needed for our analysis. In section 5 we state and
prove our main results, which show that degree and eigenvector
centrality are limiting cases of the parameterized ones. 
Section 6 contains a brief discussion of the limiting behavior
of PageRank and related techniques.
In section 7 we provide an interpretation of our results in terms
of graph walks and discuss 
the role played by the spectral 
gap and by triangles in the network. Related work is
briefly reviewed in section 8. A short summary of numerical 
experiments aimed at illustrating the theory is given in
section 9 (the details of the experiments can be found in
the Supplementary Materials accompanying this paper).
Conclusions are given in section 10.

\section{Background and definitions} 
In this section we recall some basic concepts from graph theory  
that will be used in the rest of the paper.
A more complete overview can be found, e.g., in \cite{Die00}.
For ease of exposition only {\em unweighted} and {\em loopless}
graphs are considered in this section, but nearly all of our results admit a
straightforward generalization to graphs with (positive)
edge weights, and several of the results also apply
in the presence of loops; see the end of section 5, as well 
as section 6.

A {\em directed graph}, or {\em digraph}, $G=(V,E)$ is defined by a set of $n$ nodes 
(also referred to as vertices) $V$ and a set of edges 
$E = \{(i,j)\, |\, i,j \in V\}$. Note that, in general, $(i,j)\in E$
does not imply $(j,i)\in E$. When this happens, $G$ is {\em undirected} 
and the edges 
are formed by unordered pairs of vertices.  The {\em out-degree} of a 
vertex $i$, denoted by $d_i^{out}$, is given by the number of edges 
with $i$ as the starting node, i.e., the number of edges in $E$
of the form $(i,k)$. Similarly, the {\em in-degree} of node $i$
is the number $d_i^{in}$ of edges of the form $(k,i)$. If $G$ is undirected
then $d_i^{out} = d_i^{in} = d_i$, the {\em degree} of node $i$.

A {\em walk} of length $k$ in $G$ is a list of nodes $i_1, i_2, 
\ldots i_k, i_{k+1}$ such that for all $1 \leq l \leq k$, there is 
a (directed) edge between $i_l$ and $i_{l+1}$.  A {\em closed walk} is a walk 
where $i_1 = i_{k+1}$.  A {\em path} is a walk with no repeated nodes, and
a {\em cycle} is a closed walk  with no repeated nodes
except for the first and the last one.
A graph is {\em simple} if it has no {\em loops} (edges 
from a node $i$ to itself), no multiple edges, and unweighted edges.  
An undirected graph is {\em connected} if there exists a path between 
every pair of nodes. A directed graph is {\em strongly connected}
if there exists a directed path between every pair of nodes. 

Every graph $G$ can be represented as a matrix through the use of 
an {\em adjacency matrix} $A = (a_{ij})$ with 
$$a_{ij}=\left\{\begin{array}{ll}
1,& \textnormal{ if } (i,j) \textnormal{ is an edge in } G,\\
0, & \textnormal{ else. }
\end{array}\right .
$$

If $G$ is a simple, undirected graph, $A$ is binary and symmetric 
with zeros along the main diagonal.  In this case, the eigenvalues 
of $A$ will be real.  We label the eigenvalues of $A$ in non-increasing 
order: $\lambda_1 \geq \lambda_2 \geq \ldots \geq \lambda_n.$  If 
$G$ is connected, then $\lambda_1 > \lambda_2$ by the Perron-Frobenius 
theorem \cite[page 673]{Meyer00}.   
Since $A$ is a symmetric, real-valued matrix, we can 
decompose $A$ into $A=Q\Lambda Q^T$ where $\Lambda = {\rm diag}(\lambda_1, 
\lambda_2, \ldots, \lambda_n)$ with $\lambda_1 > \lambda_2 \geq 
\ldots \geq \lambda_n$, $Q=[{\bf q}_1, {\bf q}_2, \ldots, {\bf q}_n]$ 
is orthogonal, and ${\bf q}_i$ is the eigenvector associated with $\lambda_i$.
The {\em dominant eigenvector}, ${\bf q}_1$, can be chosen to have positive
entries when $G$ is connected: we write this ${\bf q}_1 > {\bf 0}$.

If $G$ is a strongly connected digraph, its adjacency matrix $A$ is 
irreducible, and conversely. Let $\rho(A) = r$ be the spectral radius of $A$.  Then, 
again
by the Perron-Frobenius theorem, $\lambda_1 = r$ is a simple eigenvalue 
of $A$ and both the left and right eigenvectors of $A$ associated 
with $\lambda_1$ can be chosen to be positive. If $G$ is also diagonalizable, then 
there exists an invertible matrix $X$ such that $A=X\Lambda X^{-1}$ 
where $\Lambda = {\rm diag}(\lambda_1, \lambda_2, \ldots, \lambda_n)$ 
with $\lambda_1\geq |\lambda_i|$ for $2 \leq i \leq n$, $X=[{\bf x}_1, 
{\bf x}_2, \ldots, {\bf x}_n]$, and $(X^{-1})^T=[{\bf y}_1, 
{\bf y}_2, \ldots, {\bf y}_n]$.  The left eigenvector associated with 
$\lambda_i$ is ${\bf y}_i$  and the right eigenvector associated with 
$\lambda_i$ is ${\bf x}_i$.  In the case where $G$ is not diagonalizable, 
$A$ can be decomposed using the Jordan canonical form:
\begin{displaymath}\label{Dpath}
A = XJX^{-1}= X \left(\begin{array}{cc}
\lambda_1 & {\bf 0}\\
{\bf 0} & \hat{J} \\  \end{array}\right) X^{-1} \,,
\end{displaymath}
where $J$ is the Jordan matrix of $A$, except that we place the 
$1 \times 1$ block corresponding to $\lambda_1$ first for
notational convenience.
The first column ${\bf x}_1$ of $X$ is the dominant right eigenvector of $A$
 and the first column ${\bf y}_1$ of 
$X^{-T}$ is the dominant left eigenvector of $A$ (equivalently, the dominant right
eigenvector of $A^T$).

Throughout the paper, $I$ denotes the $n\times n$ identity matrix.

\section{Node centrality}\label{sec:measures}
As we discussed in the Introduction,
many measures of node centrality have been developed and used over 
the years.  
In this section we review and motivate several centrality measures
to be analyzed in the rest of the paper.

\subsection{Some common centrality measures}
Some of the most common measures include degree centrality, 
eigenvector centrality \cite{Bonacich}, 
PageRank \cite{Page},
betweenness centrality \cite{Br08,Fr77}, Katz centrality \cite{Katz}, 
and subgraph centrality \cite{NetworkProp,estradarodriguez05}.   
More recently, total communicability has been introduced as a 
centrality measure \cite{BKlym13}. 
A node is deemed ``important" according to a given centrality
index if the corresponding value
of the index is high relative to that of other nodes.

Most of these measures are applicable
to both undirected and directed graphs. In the directed case,
however, each node can play two roles: {\em sink} and {\em source}, or
{\em receiver} and {\em broadcaster}, since a node in general can
be both a starting point and an arrival point for directed edges.   
This has led to the notion of {\em hubs} and {\em authorities}
in a network, with hubs being nodes with a high broadcast centrality
index and authorities being nodes with a high receive
centrality index.  For the types of indices considered in this
paper, broadcast centrality measures correspond to quantities
computed from the adjacency matrix $A$, whereas authority
centrality measures correspond to the same quantities computed
from the transpose $A^T$ of the adjacency matrix. 
When the graph is undirected, $A=A^T$ and the broadcast
and receive centrality scores of each node coincide.

Examples of (broadcast) centrality measures are:
\begin{itemize}
\item out-degree centrality: $d_i^{out}:= [A{\bf 1}]_i$;
\item (right) eigenvector centrality: $ C_{ev}(i) := {\bf e}_i^T {\bf q}_1=q_{1}(i)$,
where ${\bf q}_1$ is the dominant (right) eigenvector of $A$;
\item exponential subgraph centrality: $SC_i(\beta)
 := [{\rm e}^{\beta A}]_{ii}$;
\item resolvent subgraph centrality: $RC_i(\alpha) := 
[(I-\alpha A)^{-1}]_{ii}$,
\item total communicability: 
$TC_i(\beta) := [{\rm e}^{\beta A}{\bf 1}]_i = {\bf e}_i^T 
{\rm e}^{\beta A}{\bf 1}$;
\item Katz centrality: $K_i(\alpha) := [(I-\alpha A)^{-1} {\bf 1}]_{i} 
= {\bf e}_i^T (I-\alpha A)^{-1} {\bf 1}$.
\end{itemize}

Here ${\bf e}_i$ is the $i$th standard basis vector,
${\bf 1}$ is the vector of all ones, 
$0 < \alpha < \frac{1}{\lambda_1}$ (see below), 
and $\beta >0$. We note that the vector of all ones is
sometimes replaced by a {\em preference vector} $\bf v$
with positive entries; for instance, ${\bf v} = {\bf d} := A{\bf 1}$
(the vector of node out-degrees). 

Replacing $A$ with $A^T$ in the definitions above we obtain the
corresponding authority measures. Thus, out-degree centrality
becomes in-degree centrality, right eigenvector centrality 
becomes left eigenvector centrality, and row sums are
replaced by column sums when computing the total communicability
centrality. Note, however,
that the exponential and resolvent subgraph centralities are
unchanged when replacing $A$ with $A^T$, since $f(A^T) = f(A)^T$
for any matrix function \cite[Theorem 1.13]{High}. Hence, measures
based on the diagonal entries
cannot differentiate between the two roles a node can
play in a directed network, and for this reason they are mostly
used in the undirected case only (but see \cite{BEK13} for an
adaptation of subgraph centrality to digraphs). 

Often, the value $\beta =1$ is used in the calculation of 
exponential subgraph 
centrality and total communicability. The parameter
$\beta$ can be interpreted 
as an {\em inverse temperature} and has been used to model the effects 
of external disturbances on the network. As $\beta \to 0+$, 
the ``temperature" of the environment surrounding the network 
increases, corresponding to more intense external disturbances.  
Conversely, as $\beta \to \infty$, the temperature goes to 0 and the 
network ``freezes.'' We refer the reader to \cite{EHB11} for
an extensive discussion and applications of these physical analogies. 

\subsection{Justification in terms of graph walks}\label{subsec:walks}
The justification behind using the (scaled) matrix 
exponential to compute centrality measures can be seen by considering the 
power series expansion of ${\rm e}^{\beta A}$:
\begin{equation}\label{SC}
	{\rm e}^{\beta A} = I + \beta A + \frac{(\beta A)^2}{2!}+ \cdots + \frac{(\beta A)^k}{k!} + \cdots = \sum_{k=0}^{\infty} \frac{(\beta A)^k}{k!}.
\end{equation}

It is well-known that given an adjacency matrix 
$A$ of an unweighted network, $[A^k]_{ij}$ counts the total number 
of walks of length $k$ between nodes $i$ and $j$.  Thus 
$SC_i(\beta)=[{\rm e}^{\beta A}]_{ii}$, the exponential 
subgraph centrality of node $i$, counts the total number of closed 
walks in the network which are centered at node $i$, weighing walks 
of length $k$ by a factor of $\frac{\beta^k}{k!}$. 
Unlike degree, which is a purely local index, subgraph
centrality takes into account the short, medium and long range influence
of all nodes on a given node (assuming $G$ is strongly connected). 
Assigning decreasing weights to longer
walks ensures the convergence of the series (\ref{SC}) while
guaranteeing that short-range interactions are given more weight
than long-range ones \cite{estradarodriguez05}.

Total communicability is closely related to 
subgraph centrality.  This measure also counts the number of walks 
starting at node $i$, scaling walks of length $k$ by $\frac{\beta^k}{k!}$.  
However, rather than just counting closed walks, total 
communicability counts all walks between node $i$ and every node in the 
network. The name stems from the fact that $TC_i(\beta)=\sum_{j=1}^n
C_{ij}(\beta)$ where $C_{ij}(\beta):=[{\rm e}^{\beta A}]_{ij}$, the
communicability between nodes $i$ and $j$, is a measure of how
``easy" it is to exchange a message between nodes $i$ and $j$ over
the network;
see \cite{EHB11} for details.  
  Although subgraph centrality and total communicability 
are clearly related, they do not always provide the same 
ranking of the nodes.  Furthermore, unlike subgraph centrality,
total communicability can distinguish between the two roles
a node can play in a directed network. 
More information about the relation between the two measures can be 
found in \cite{BKlym13}.

The matrix resolvent $(I-\alpha A)^{-1}$ was first used to rank nodes 
in a network in the early 1950s, when Katz  used the column sums  
to calculate node importance \cite{Katz}. Since then, the diagonal 
values have also been used as a centrality measure, see 
\cite{NetworkProp}.  The resolvent subgraph centrality score of node 
$i$ is given by $[(I-\alpha A)^{-1}]_{ii}$ and the Katz centrality 
score is given by either $[(I-\alpha A)^{-1} {\bf 1}]_{i}$ or
$[(I-\alpha A^T)^{-1} {\bf 1}]_{i}$, depending on whether hub or
authority scores are desired. 
As mentioned, $\bf 1$ may be replaced by an arbitary (positive)
preference vector, $\bf v$.

As when using the matrix exponential, these resolvent-based centrality 
measures count the number of walks in the network, penalizing longer 
walks.  This can be seen by considering the Neumann series expansion 
of $(I-\alpha A)^{-1}$, valid for $0 < \alpha < \frac{1}{\lambda_1}$:
\begin{equation}
(I-\alpha A)^{-1} = I + \alpha A + \alpha^2 A^2 +\cdots + \alpha^k A^k 
+ \cdots = \sum_{k=0}^{\infty} \alpha^kA^k.
\label{eq:powerseries}
\end{equation}
The resolvent subgraph centrality of node $i$,  $[(I-\alpha A)^{-1}]_{ii}$, 
counts the total number of closed walks in the network which are centered 
at node $i$, weighing walks of length $k$ by $\alpha^k$.  Similarly, 
the Katz centrality 
of node $i$ counts all walks beginning at node $i$, penalizing the 
contribution of walks of length $k$ by $\alpha^k$.   The bounds on 
$\alpha$ ($0 < \alpha < \frac{1}{\lambda_1}$) ensure that the matrix 
$I-\alpha A$ is invertible and that the power series in 
(\ref{eq:powerseries}) converges to its inverse.  The bounds on 
$\alpha$ also force $(I-\alpha A)^{-1}$ to be nonnegative, as 
$I- \alpha A$ is a nonsingular $M$-matrix. 
Hence, both the diagonal entries and the row/column sums of
$(I-\alpha A)^{-1}$ are positive and can thus be used
for ranking purposes.

\section{A general class of functional centrality measures}
In this section we establish precise conditions that a matrix
function $f(A)$, where $A$ is the adjacency matrix of a network,
should satisfy in order to be used in the definition of walk-based 
centrality measures. We consider in particular analytic functions
expressed by power series, with a focus on issues like convergence, 
positivity, and dependence on a tuneable parameter $t>0$. 
We also formulate some auxiliary results on power series that
will be crucial for the analysis to follow.
For an introduction to the properties of analytic functions
see, e.g., \cite{MH87}. 

\subsection{Admissible matrix functions}
As discussed in subsection \ref{subsec:walks}, walk-based 
centrality measures (such as Katz or subgraph
centrality) lead to power series expansions in the (scaled) adjacency matrix 
of the network. While exponential- and resolvent-based centrality measures
are especially
natural (and well-studied), there are {\em a priori} infinitely many other 
matrix functions which could be used \cite{REG07,NetworkProp}. 
Not every function of the adjacency matrix, however, is suitable
for the purpose of defining centrality measures, and some restrictions
must be imposed. 

A first obvious condition is that the function should be
defined by a power series with {\em real} coefficients. This guarantees that
$f(z)$ takes real values when the argument is real, and that $f(A)$ has real
entries for any real $A$.
In \cite{REG07} (see also \cite{NetworkProp}), the authors proposed to consider
only analytic functions admitting a Maclaurin series expansion of the form
\begin{equation}\label{F1}
f(z) = \sum_{k=0}^{\infty} c_k z^k, \quad c_k \ge 0 \quad{\rm for } \quad
 k\ge 0\,.
\end{equation}

This ensures that $f(A)$ will be nonnegative for any adjacency matrix $A$.
In \cite{NetworkProp} it is further required that 
$c_k > 0$ for all $k = 1,2,\ldots, n-1$, so as to guarantee that $[f(A)]_{ij}>0$ 
for all $i\ne j$ whenever the network is (strongly) connected.\footnote{We
recall that a nonnegative $n\times n$ matrix $A$ is irreducible if and
only if $(I+A)^{n-1} >0$. See, e.g., \cite[Theorem 6.2.24]{HJ}.}   
Although not explicitly stated in \cite{NetworkProp}, it is clear that 
if one wants {\em all} the walks (of any length) in $G$ to make a 
positive contribution to a centrality measure based on $f$, then one
should impose the more restrictive condition $c_k > 0$ for all $k\ge 0$. 
Note that $c_0$
plays no significant role, since it's just a constant value added to all the
diagonal entries of $f(A)$ and therefore does not affect the rankings. However,
imposing $c_0 > 0$ guarantees that all entries of $f(A)$ are positive, and leads
to simpler formulas.
Another tacit assumption in \cite{NetworkProp} is that only power
series with a positive radius of convergence should be considered.

In the following, we will denote by $\cal P$ the class of analytic functions
that can be expressed as sums of power series with strictly positive
coefficients on some open neighborhood of $0$.
We note in passing that $\cal P$ forms a {\em positive cone} in function
space, i.e., $\cal P$ is closed under linear combinations with positive
coefficients.

Clearly, given an arbitrary adjacency matrix $A$, the matrix function
$f(A)$, with $f\in {\cal P}$, need not be defined; ndeed, $f$ must be
defined on the spectrum of $A$ \cite{High}. If $f$ is entire (i.e.,
analytic in the whole complex plane, like
the exponential function) then $f(A)$ will always be defined, but this
is not the case of functions with singularities, such as the resolvent. 
However, this difficulty can be easily circumvented by introducing
a (scaling) parameter $t$, and by considering for a given $A$ the parameterized
matrix function $g(t,A):= f(tA)$ only for values of $t$ such that
the power series 
$$f(tA) = c_0 I + c_1 t A + c_2 t^2 A^2 +\cdots = \sum_{k=0}^\infty c_k t^k A^k$$
is convergent; that is, such that $|t \lambda_1(A)|< R_f$, where $R_f$ denotes the radius
of convergence of the power series representing $f$. In practice, for the purposes
of this paper, we will limit ourselves to {\em positive} values of $t$
in order to guarantee that $f(tA)$ is entry-wise positive, as required
by the definition of a centrality index.
We summarize our discussion so far in the following lemma.

\vskip 0.05in

\begin{lem}\label{Lemma1}
Let $\cal P$ be the class of all analytic functions that can be expressed by a
Maclaurin series with strictly positive coefficients in an open disk centered at $0$.
Given an irreducible adjacency matrix $A$ and a function $f\in \cal P$ with radius
of convergence $R_f>0$, let $t^* = R_f/\lambda_1(A)$. Then $f(tA)$
is defined and strictly positive for all $t\in (0,t^*)$. If $f$ is entire,
then one can take $t^* = \infty$.
\end{lem}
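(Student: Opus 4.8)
The plan is to prove the two claimed properties—definedness and strict positivity—separately, since each follows from a different part of the hypotheses. First I would handle definedness. The key point is that a matrix function $f(B)$ is defined precisely when $f$ is defined on the spectrum of $B$ \cite{High}, and for a function given by a convergent power series this reduces to requiring that every eigenvalue of $B = tA$ lie inside the disk of convergence. Since $A$ is irreducible and nonnegative, Perron--Frobenius guarantees that $\lambda_1(A)$ equals the spectral radius, so $|\mu| \le \lambda_1(A)$ for every eigenvalue $\mu$ of $A$, hence $|t\mu| \le t\,\lambda_1(A)$ for $t>0$. Therefore, as long as $t\,\lambda_1(A) < R_f$, i.e.\ $t < R_f/\lambda_1(A) = t^*$, every eigenvalue $t\mu$ of $tA$ satisfies $|t\mu| < R_f$ and lies in the open disk of convergence. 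This shows $f(tA) = \sum_{k=0}^\infty c_k t^k A^k$ converges and defines $f(tA)$ for all $t \in (0,t^*)$.

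Next I would establish strict positivity. Here I would use the series expansion directly, writing $f(tA) = \sum_{k=0}^\infty c_k t^k A^k$ with $c_k > 0$ for all $k$ by the definition of $\cal P$. Each term $c_k t^k A^k$ is entry-wise nonnegative since $A$ is a nonnegative matrix and $c_k, t > 0$; so $f(tA)$ is at least nonnegative, and I need only promote this to \emph{strict} positivity of every entry. For a fixed pair $(i,j)$, the $(i,j)$ entry of the sum is strictly positive as soon as a single term $[A^k]_{ij}$ is positive, because all coefficients $c_k t^k$ are positive and there is no cancellation. The diagonal ($k=0$) term $c_0 I$ already makes the diagonal entries positive, so it remains to treat off-diagonal entries. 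For $i \ne j$, irreducibility supplies, for each ordered pair, some power $k$ with $[A^k]_{ij} > 0$ (a walk of some length connecting $i$ to $j$); the footnote's characterization via $(I+A)^{n-1} > 0$ makes this concrete, guaranteeing such a walk of length at most $n-1$. Thus every off-diagonal entry receives a strictly positive contribution, and $f(tA) > 0$ entry-wise.

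Finally, for the entire case I would observe that $R_f = \infty$ means the power series converges for every complex argument, so there is no constraint $|t\mu| < R_f$ to satisfy: $f(tA)$ is defined for all $t>0$, and the positivity argument above goes through verbatim for every such $t$. Setting $t^* = \infty$ in this case is therefore consistent, and the interval $(0,t^*)$ becomes the whole positive half-line.

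I expect the main subtlety to lie not in any single calculation but in cleanly separating the two requirements and invoking the right hypothesis for each. Definedness relies solely on the spectral radius being $\lambda_1(A)$ and on the radius of convergence $R_f$; positivity relies on the strict positivity of the coefficients together with irreducibility. The one point requiring a little care is verifying that every off-diagonal entry is hit by \emph{some} positive power of $A$; this is exactly where irreducibility is essential, and the $(I+A)^{n-1}>0$ criterion cited in the footnote is the most economical way to guarantee it. Once both pieces are in place the result follows immediately.
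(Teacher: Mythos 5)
Your proposal is correct and follows essentially the same route as the paper, which does not give a formal proof but derives the lemma from the immediately preceding discussion: convergence of $\sum_k c_k t^k A^k$ when $t\lambda_1(A) < R_f$ (using that $\lambda_1(A)$ is the spectral radius of the irreducible nonnegative matrix $A$), and strict positivity from $c_k>0$ together with irreducibility via the $(I+A)^{n-1}>0$ criterion cited in the footnote. Your explicit separation of the two claims and the remark that $c_0>0$ handles the diagonal entries matches the paper's reasoning exactly.
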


\vskip 0.05in

Restriction of $f$ to the class $\cal P$ and use of a positive
parameter $t$, which will depend on $A$ and $f$ in case $f$ is not entire, 
allows one to define the notion of $f$-centrality (as well as $f$-communicability,
$f$-betweenness, and so forth, see \cite{NetworkProp}). 
Exponential subgraph centrality (with $t=\beta$) is an example
of an entire function (hence all positive values of $\beta$ are feasible),
while resolvent subgraph centrality
(with $t = \alpha$) exemplifies the situation where the parameter 
must be restricted to a finite interval, in this case $(0,\frac{1}{\lambda_1(A)})$
(since the geometric series $1 + x + x^2 + \cdots$
has radius of convergence $R_f = 1$).

We consider now two subclasses of the class $\cal P$ previously introduced.
We let ${\cal P}_\infty$ denote the set of all power series in $\cal P$ with radius of
convergence $R_f =\infty$, and with ${\cal P}^\infty$ the set of all power
series with finite radius of convergence $R_f$ such that 
\begin{equation}\label{cond}
\sum_{k=0}^\infty c_k R_f^k = \lim_{t\to 1^-} \sum_{k=0}^\infty c_k t^k R_f^k = \infty
\end{equation}
(we note that the first equality above follows from Abel's Theorem \cite[p.~229]{MH87}).
The exponential and the resolvent are representative
of functions in ${\cal P}_\infty$ and ${\cal P}^\infty$, respectively.
It is worth emphasizing that together, ${\cal P}_\infty$ and ${\cal P}^\infty$
do not exhaust the class $\cal P$. For example, the function
$f(z)=\sum_{k=0}^\infty \frac{z^k}{k^2}$  is in $\cal P$, but it is not in
${\cal P}_\infty$ (since its radius of convergence is $R_f=1$) or in ${\cal P}^\infty$, since
$$\lim_{t\to 1^-} \sum_{k=0}^\infty \frac{t^kR_f^k}{k^2} = \sum_{k=0}^\infty \frac{1}{k^2}
=\frac{\pi^2}{6} < \infty.$$     

In section \ref{sec:undirected_cent} we will analyze centrality measures based on
functions $f$ in ${\cal P}$ and its subclasses,
${\cal P}_\infty$ and ${\cal P}^\infty$.

\subsection{Asymptotic behavior of the ratio of two power series} 
In our study of the limiting behavior of parameter-dependent functional
centrality measures we will need to investigate the asymptotic behavior
of the ratio of two power series with positive coefficients. The following 
technical lemmas will
be crucial for our analysis. 

\vskip 0.05in

\begin{lem}\label{Lemma2}
Let the power series $\sum_{k=0}^\infty a_kt^k$, $\sum_{k=0}^\infty b_kt^k$ 
have positive real coefficients and be
convergent for all $t\ge 0$.  
If ${\displaystyle {\lim_{k\to \infty} \frac{a_k}{b_k} = 0}}$,
then
$$\lim_{t \to \infty}\frac{\sum_{k=0}^\infty a_kt^k}{\sum_{k=0}^\infty b_kt^k}=0\,.$$
\end{lem}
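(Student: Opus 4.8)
The plan is to exploit the hypothesis $a_k/b_k \to 0$ through a standard ``head plus tail'' splitting of the numerator, combined with the observation that an everywhere-convergent power series with strictly positive coefficients outgrows any fixed polynomial as $t\to\infty$. First I note that the ratio is well defined for every $t\ge 0$: since all $b_k>0$ (in particular $b_0>0$), the denominator $\sum_{k=0}^\infty b_k t^k$ is strictly positive.

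Next I would fix $\e>0$. Because $\lim_{k\to\infty} a_k/b_k = 0$ and $b_k>0$, there is an index $N$ with $a_k < \e\, b_k$ for all $k\ge N$. I then split the numerator and bound its tail, using that every term is nonnegative for $t\ge 0$:
$$\sum_{k=0}^\infty a_k t^k = \sum_{k=0}^{N-1} a_k t^k + \sum_{k=N}^\infty a_k t^k \le p(t) + \e \sum_{k=N}^\infty b_k t^k \le p(t) + \e \sum_{k=0}^\infty b_k t^k,$$
where $p(t) := \sum_{k=0}^{N-1} a_k t^k$ is a polynomial of degree at most $N-1$. Dividing by the (positive) denominator gives, for $t>0$,
$$\frac{\sum_{k=0}^\infty a_k t^k}{\sum_{k=0}^\infty b_k t^k} \le \frac{p(t)}{\sum_{k=0}^\infty b_k t^k} + \e.$$

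The heart of the argument is to show that the remaining term vanishes as $t\to\infty$. For this I would bound the denominator below by a single term of degree exceeding that of $p$: since all $b_k>0$, for $t>0$ we have $\sum_{k=0}^\infty b_k t^k \ge b_N t^N$, hence $p(t)\big/\sum_{k=0}^\infty b_k t^k \le p(t)/(b_N t^N)$. As $\deg p \le N-1 < N$, the right-hand side tends to $0$ as $t\to\infty$, so there is $T>0$ with $p(t)\big/\sum_{k=0}^\infty b_k t^k < \e$ for all $t>T$. The full ratio is then below $2\e$ for $t>T$, and since $\e$ was arbitrary the limit is $0$.

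I expect the only genuinely load-bearing step to be this super-polynomial growth of the denominator --- the fact that a series convergent for all $t$ with strictly positive coefficients eventually dominates any fixed polynomial. This is precisely where both hypotheses are used (positivity of the $b_k$, so that a high-degree term survives the lower bound, and convergence for all $t\ge 0$, so that arbitrarily high-degree terms are available); everything else is routine bookkeeping with nonnegative terms.
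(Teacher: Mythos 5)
Your proof is correct and follows essentially the same route as the paper's: split off the first $N$ terms of the numerator, bound the tail by $\varepsilon$ times the denominator using $a_k<\varepsilon b_k$, and observe that the leftover polynomial is killed by the super-polynomially growing denominator. If anything, you are slightly more explicit than the paper at the one delicate point --- the paper declares that the head term ``manifestly tends to zero,'' whereas you justify it by the lower bound $\sum_k b_k t^k \ge b_N t^N$ with $\deg p \le N-1$.
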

\begin{proof}
Let $\varepsilon > 0$ be arbitrary and let $N>0$ be such that 
$\frac{a_k}{b_k} < \varepsilon$ for all $k>N$. Also, let
$g(t) = \sum_{k=0}^\infty a_kt^k$ and $h(t) = \sum_{k=0}^\infty b_kt^k$.
We have
\begin{equation}\label{sum}
\frac{g(t)}{h(t)} = \frac{T_N^g(t) + g_1(t)}{T_N^h(t) + h_1(t)}
 = \frac{T_N^g(t)}{T_N^h(t) + h_1(t)}
                           + \frac{g_1(t)}{T_N^h(t) + h_1(t)},
\end{equation}
with $T_N^g(t) = \sum_{k=0}^N a_kt^k $, $T_N^h(t) = \sum_{k=0}^N b_kt^k $ 
and $g_1(t)$, $h_1(t)$ being the tails of the corresponding series.
The first term on the right-hand side of (\ref{sum}) manifestly tends to zero
as $t\to \infty$. The second term is clearly bounded above by
$g_1(t)/h_1(t)$.
The result then follows from 
$$g_1(t) = \sum_{k=N+1}^\infty a_kt^k = \sum_{k=N+1}^\infty \left (\frac{a_k}{b_k}
\right )b_k t^k
< \varepsilon\, h_1(t)$$
and the fact that $\varepsilon$ is arbitrary.
\end{proof}

\vskip 0.05in

\begin{lem}\label{Lemma3}
Let $\lambda_1, \lambda_2, \ldots ,\lambda_n\in \complex$
be given, with $\lambda_1 > |\lambda_2|\ge \cdots \ge |\lambda_n|$,
and let $f\in {\cal P}_\infty \cup {\cal P}^\infty$ be defined 
at these points. 
Then
\begin{equation}\label{Asy}
\lim_{t \to t^* -} \frac{t^jf^{(j)}(t \lambda_i)}{f(t \lambda_1)} = 0,
\quad {\rm for}\quad j=0,1,\ldots , \quad i = 2,\ldots n\,,
\end{equation}
where $t^* = R_f/\lambda_1$ and $R_f$ is the radius of convergence
of the series defining $f$ around $0$ (finite or infinite according to
whether $f\in {\cal P}^\infty$ or $f\in {\cal P}_\infty$, respectively).
\end{lem}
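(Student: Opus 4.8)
The plan is to split the argument according to whether $f\in{\cal P}_\infty$ (so $R_f=\infty$ and $t^*=\infty$) or $f\in{\cal P}^\infty$ (so $R_f<\infty$ and condition (\ref{cond}) holds); the limits are forced to vanish for genuinely different reasons in the two cases, and the dominance $\lambda_1>|\lambda_i|$ is the engine in both.

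First I would reduce the entire case to Lemma \ref{Lemma2}. Writing $f(z)=\sum_{k\ge 0}c_kz^k$ with $c_k>0$ and differentiating termwise, for $t>0$ one has
$$t^jf^{(j)}(t\lambda_i)=\sum_{k\ge j}c_k\frac{k!}{(k-j)!}\lambda_i^{\,k-j}t^k,$$
so taking moduli gives the bound $|t^jf^{(j)}(t\lambda_i)|\le A_j(t):=\sum_{k\ge j}a_kt^k$ with $a_k=c_k\frac{k!}{(k-j)!}|\lambda_i|^{k-j}$, while the denominator is $f(t\lambda_1)=\sum_{k\ge0}b_kt^k$ with $b_k=c_k\lambda_1^k$. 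When $f$ is entire both series converge for every $t\ge0$, so Lemma \ref{Lemma2} applies once we verify $a_k/b_k\to0$. This is immediate since
$$\frac{a_k}{b_k}=\frac{1}{\lambda_1^{\,j}}\,\frac{k!}{(k-j)!}\left(\frac{|\lambda_i|}{\lambda_1}\right)^{k-j},$$
where the factorial ratio grows only polynomially in $k$ (degree $j$) whereas $(|\lambda_i|/\lambda_1)^{k-j}$ decays geometrically because $|\lambda_i|<\lambda_1$. Thus $A_j(t)/f(t\lambda_1)\to0$ as $t\to\infty$, which is (\ref{Asy}) with $t^*=\infty$. (The finitely many vanishing coefficients $a_0,\dots,a_{j-1}$ do not affect the $t\to\infty$ limit and can be replaced by arbitrary positive numbers to match the hypotheses of Lemma \ref{Lemma2} verbatim.)

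For $f\in{\cal P}^\infty$ I would argue instead by a boundedness-versus-blowup comparison, since Lemma \ref{Lemma2} is unavailable (the series need not converge beyond $t=t^*$ and the relevant limit is $t\to t^{*-}$, not $t\to\infty$). As $t\to t^{*-}$ the argument of the numerator satisfies $|t\lambda_i|\to t^*|\lambda_i|=R_f|\lambda_i|/\lambda_1<R_f$, so $t\lambda_i$ stays in a closed disk that is a compact subset of the open disk of convergence; since $f$, and hence $f^{(j)}$, is analytic there, $f^{(j)}(t\lambda_i)$ is bounded and $t^jf^{(j)}(t\lambda_i)$ remains bounded. The denominator, on the other hand, diverges: as $t\to t^{*-}$ we have $t\lambda_1\to R_f^-$, and since the coefficients $c_k$ are positive the partial sums of $\sum_k c_k(t\lambda_1)^k$ increase to $\sum_k c_kR_f^k=\infty$ by condition (\ref{cond}) (this is precisely the Abel-theorem limit recorded there). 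A bounded quantity divided by one tending to $+\infty$ tends to $0$, giving (\ref{Asy}).

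The main obstacle is essentially the bookkeeping in the entire case: one must pass from the complex, derivative-laden numerator $t^jf^{(j)}(t\lambda_i)$ to a genuine positive-coefficient power series to which Lemma \ref{Lemma2} applies, and then check that differentiation (which injects the factor $k!/(k-j)!$) does not overcome the geometric gain from $|\lambda_i|<\lambda_1$. Both steps are clean once the spectral gap $\lambda_1>|\lambda_i|$ is used; no delicate estimate is needed beyond the polynomial-versus-geometric comparison. The finite-radius case requires no such comparison and rests only on the defining divergence property (\ref{cond}) of ${\cal P}^\infty$ together with interior analyticity of $f^{(j)}$.
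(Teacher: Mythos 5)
Your proposal is correct and follows essentially the same route as the paper: for $f\in{\cal P}^\infty$ the numerator stays bounded because $t^*\lambda_i$ lies strictly inside the disk of convergence while the denominator blows up by condition \nref{cond}, and for $f\in{\cal P}_\infty$ one writes $t^jf^{(j)}(t\lambda_i)$ as a positive-coefficient majorant series, observes that the coefficient ratio is a polynomial factor times $(|\lambda_i|/\lambda_1)^k$ and hence tends to zero, and invokes Lemma \ref{Lemma2}. The only differences are cosmetic (your indexing of the differentiated series and your explicit remark about the vanishing coefficients $a_0,\dots,a_{j-1}$, which the paper glosses over).
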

\begin{proof}
Consider first the case $t^* < \infty$. In this case the assumption
that $f\in {\cal P}^{\infty}$ 
guarantees (cf.~(\ref{cond})) that the denominator of
(\ref{Asy}) tends to infinity, whereas the numerator
remains finite for all $i\ne 1$ and all $j$. 
Indeed, each derivative $f^{(j)}(z)$ of $f(z)$ can
be expressed by a power
series having the same radius of convergence as the power series
expressing $f(z)$. Since each 
$t^*\lambda_i$ (with $i\ne 1$) falls inside the circle of convergence, we
have $|f^{(j)}(t^*\lambda_i)| < \infty$ for each $j\ge 0$, hence (\ref{Asy}).

Next, we consider the case where $t^* = \infty$. 
Let $i\ne 1$ and assume $\lambda_i \ne 0$ (the result is trivial for
$\lambda_i =0$). 
Since $f$ is entire, so are all its derivatives and moreover 
\begin{equation}\label{ineq1}
|f^{(j)}(t \lambda_i)| = \left |\sum_{k=0}^\infty (k+j)^{\underline j}\, c_{k+j} t^k 
\lambda_i^k \right | 
\le 
\sum_{k=0}^\infty (k+j)^{\underline j}\, c_{k+j} t^k |\lambda_i|^k < \infty\,,
\end{equation}
where we have used the (standard) notation $(k+j)^{\underline j}=
(k+j)(k+j-1)\cdots (k+1)$ (with the convention $k^{\underline 0}=1$).
Now let $a_k = (k+j)^{\underline j}\, c_{k+j} \lambda_i^k$ be the
coefficient of $t^k$
in the power series expansion of
$t^j f^{(j)}(t\lambda_i)$ and let 
$b_k = c_k \lambda_i^k$ be the
coefficient of $t^k$
in the power series expansion of
$f(t\lambda_1)$, then
\begin{equation} \label{ratio}
\frac{a_k}{b_k} = (k+j)^{\underline j} 
\left (\frac{|\lambda_i|}{\lambda_1}\right )^k, \quad {\rm for\,\, all}\quad k\ge j.
\end{equation}
Since exponential decay trumps polynomial growth, we
conclude that the expression in (\ref{ratio}) tends to zero as $k\to \infty$.
Using Lemma \ref{Lemma2} we obtain the desired conclusion.
\end{proof}

\vskip 0.05in

As we will see in the next section, the limit (\ref{Asy})
with $j=0$ will be instrumental in our analysis of
undirected networks, while the general case is needed for the
analysis of directed networks.

\section{Limiting behavior of parameterized centrality measures}
\label{sec:undirected_cent}
One difficulty in measuring the ``importance'' of a node in a 
network is that it is not always clear which 
of the many centrality measures should be used. Additionally, 
it is not clear a priori when two centrality measures will 
give similar node rankings on a given network.  When using 
parameter-dependent indices, such as 
Katz, exponential, or resolvent-based subgraph centrality, the necessity 
of choosing the value of the parameter 
adds another layer of difficulty. For instance, it is well known
that using different choices of $\alpha$ 
and $\beta$ in Katz and subgraph centrality
will generally produce different centrality scores 
and can lead to different node rankings. However, experimentally, 
it has been observed that different centrality measures often provide 
rankings that are highly correlated 
\cite{BKlym13,Dingetal,FR,Lee,MiPapa}. Moreover, in most cases, the 
{\em rankings} are quite stable, in the sense that
they do not appear to change much for different choices of 
$\alpha$ and $\beta$, even if the actual {\em scores}
may vary by orders of magnitude \cite{Klymko13}. With Katz and subgraph
centrality this happens in particular when
the parameters
$\alpha$ and $\beta$ approach their limits:
$$\alpha \to 0+,\quad \alpha \to \frac{1}{\lambda_1}-, \quad
\beta \to 0+, \quad \beta \to \infty\,.$$
Noting that the first derivatives of the node centrality measures
grow unboundedly as $\alpha \to \frac{1}{\lambda_1}-$ and as
$\beta \to \infty$, the centrality scores are extremely sensitive
to (vary extremely rapidly with) small changes in $\alpha$
when $\alpha$ is close to $\frac{1}{\lambda_1}-$, and in
$\beta$ when $\beta$ is even moderately large. Yet, the
rankings produced stabilize quickly and do not change much 
(if at all) when $\alpha$ and $\beta$
approach these limits. The
same is observed as $\alpha, \beta \to 0+$ .

The remainder of this section is devoted to proving
that the same behavior can be expected, more generally, 
when using  parameterized centrality measures based on analytic functions
$f\in {\cal P}$. The observed behavior for Katz and subgraph centrality
measures is thus explained and generalized.

It is worth noting that while all the parameterized centrality measures
considered here depend continuously on $t\in [0,t^*)$, the
rankings do not: hence, the limiting behavior of the ranking as the
parameter tends to zero cannot be obtained by simply setting the
parameter to zero.

\subsection{Undirected networks}\label{sec:undir}
We begin with the undirected case. The following theorem
is our  main result.
It completely describes the limiting behavior, for
``small" and ``large" values of the parameter, of parameterized
functional centrality measures based on either the diagonal entries
or the row sums.
Recall that a nonnegative matrix $A$ is {\em primitive} if
$\lambda_1>|\lambda_i|$
for $i=2,\ldots ,n$; see, e.g., \cite[p.~674]{Meyer00}. 

\vskip 0.05in

\begin{theorem}\label{Thm1}
Let $G=(V,E)$ be a connected, undirected, unweighted network with 
adjacency matrix $A$, assumed to be primitive, and let $f\in \cal P$ be defined on
the spectrum of $A$. Let $SC_i(t) = [f(tA)]_{ii}$ 
be the $f$-subgraph centrality of node $i$ and let ${\bf SC}(t)$ be the 
corresponding vector of $f$-subgraph centralities. 
Also, let $TC_i(t) = [f(tA){\bf 1}]_i$ be the total
$f$-communicability of node $i$
and let ${\bf TC}(t)$ be the corresponding vector.
Then,
\begin{enumerate}[(i)]
\item as $t \to 0+$, the rankings produced by both ${\bf SC}(t)$ 
and ${\bf TC}(t)$
converge to those produced by ${\bf d} = (d_i)$, the vector of degree 
centralities;  			
\item if in addition $f\in {\cal P}_\infty \cup {\cal P}^\infty$,
 then for $t \to t^* -$ the rankings produced by both 
${\bf SC}(t)$ and ${\bf TC}(t)$ converge to those produced by 
eigenvector centrality, i.e., by the entries of ${\bf q}_1$, 
the dominant eigenvector of $A$; 
\item the conclusion in (ii) still holds if the vector of all ones $\bf 1$
is replaced by any preference vector ${\bf v} > {\bf 0}$ in the 
definition of ${\bf TC}(t)$.
\\
\end{enumerate}
\end{theorem}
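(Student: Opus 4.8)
The plan is to carry everything through the spectral decomposition $A=Q\Lambda Q^T$ available in the undirected case, writing
$$f(tA) = \sum_{k=1}^n f(t\lambda_k)\,{\bf q}_k{\bf q}_k^T,$$
so that $SC_i(t)=\sum_{k=1}^n f(t\lambda_k)\,q_k(i)^2$ and $TC_i(t)=\sum_{k=1}^n f(t\lambda_k)\,q_k(i)({\bf q}_k^T{\bf 1})$. Since a ranking is unchanged under subtraction of a common constant and under multiplication by a positive scalar, the entire argument reduces to determining, for each pair of nodes $i,j$, the sign of a suitably normalized difference of their centrality values as $t$ approaches each endpoint; a definite nonzero limiting sign then forces the corresponding strict inequality to hold eventually, which is exactly what convergence of the ranking means (nodes tied in the limiting index impose no constraint).

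For part (i) I would discard the spectral form and use instead the Maclaurin series $f(tA)=\sum_{m\ge0}c_mt^mA^m$ with every $c_m>0$. Because $G$ is loopless, $A$ has zero diagonal, so $[A]_{ii}=0$ and $[A^2]_{ii}=\sum_j a_{ij}^2=d_i$, while $[A{\bf 1}]_i=d_i$. Hence the lowest-order term that distinguishes two nodes is $c_2 d_i\,t^2$ for $SC_i(t)$ and $c_1 d_i\,t$ for $TC_i(t)$, each with a strictly positive coefficient. Consequently, if $d_i>d_j$ then both $SC_i(t)-SC_j(t)$ and $TC_i(t)-TC_j(t)$ are positive for all sufficiently small $t>0$, giving convergence of both rankings to the degree ranking as $t\to0+$.

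For parts (ii) and (iii) I would return to the spectral form and normalize by $f(t\lambda_1)$, which is positive for $t>0$ since $\lambda_1>0$ and $f$ has positive coefficients. This gives
$$\frac{SC_i(t)}{f(t\lambda_1)}=q_1(i)^2+\sum_{k=2}^n\frac{f(t\lambda_k)}{f(t\lambda_1)}\,q_k(i)^2$$
and, with a preference vector ${\bf v}>{\bf0}$ (taking ${\bf v}={\bf 1}$ recovers (ii)),
$$\frac{TC_i(t)}{f(t\lambda_1)}=q_1(i)({\bf q}_1^T{\bf v})+\sum_{k=2}^n\frac{f(t\lambda_k)}{f(t\lambda_1)}\,q_k(i)({\bf q}_k^T{\bf v}).$$
Primitivity gives $\lambda_1>|\lambda_k|$ for $k\ge2$, so Lemma~\ref{Lemma3} with $j=0$ sends every ratio $f(t\lambda_k)/f(t\lambda_1)$ to $0$ as $t\to t^*-$ and drives the sums to zero. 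The normalized scores therefore tend to $q_1(i)^2$ and to $q_1(i)({\bf q}_1^T{\bf v})$, respectively. Since ${\bf q}_1>{\bf0}$, the squaring map is order-preserving on its (positive) entries, and the scalar ${\bf q}_1^T{\bf v}=\sum_i q_1(i)v_i$ is strictly positive whenever ${\bf v}>{\bf0}$; hence both limiting rankings coincide with the eigenvector-centrality ranking determined by the entries of ${\bf q}_1$, proving (ii) and (iii) at once.

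The step I expect to require the most care is not any single computation but the logical bridge from an analytic limit to a statement about rankings: one must phrase convergence pairwise and invoke the positivity ${\bf q}_1>{\bf0}$ at two distinct points — to make $x\mapsto x^2$ monotone on the entries of ${\bf q}_1$ in the subgraph-centrality case, and to keep the surviving scalar ${\bf q}_1^T{\bf v}$ positive in the communicability case — so that the normalizing factors never reverse an inequality. The role of primitivity (which excludes bipartite graphs, where $\lambda_n=-\lambda_1$ would violate the strict gap) is also worth flagging explicitly, since it is precisely the hypothesis that makes Lemma~\ref{Lemma3} applicable in part (ii).
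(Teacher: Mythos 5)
Your proposal is correct and follows essentially the same route as the paper: the Maclaurin expansion with $[A]_{ii}=0$, $[A^2]_{ii}=d_i$, $[A{\bf 1}]_i=d_i$ for the $t\to 0+$ limit, and the spectral decomposition normalized by $f(t\lambda_1)$ combined with Lemma~\ref{Lemma3} (with $j=0$) for the $t\to t^*-$ limit. The only differences are cosmetic — you phrase ranking convergence via pairwise signs of differences rather than via convergence of a shifted/rescaled score vector, and you fold (iii) into (ii) by carrying a general ${\bf v}>{\bf 0}$ throughout — and your explicit remarks about where ${\bf q}_1>{\bf 0}$ and primitivity are used match the paper's reasoning.
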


\begin{proof}
To prove (i), consider first the Maclaurin expansion of $SC_i(t)$: 
$$SC_i(t) = c_0 + c_1 t [A]_{ii} + c_2 t^2 [A^2]_{ii} + c_3 t^3 [A^3]_{ii}
+ \cdots = c_0 + 0 + c_2 t^2 d_i + c_3 t^3 [A^3]_{ii} + \cdots $$ 
Let $\boldsymbol\phi(t):= \frac{1}{c_2t^2}[{\bf SC}(t) -c_0{\bf 1}]$. 
The rankings produced by $\boldsymbol\phi(t)$ will be the 
same as those produced by ${\bf SC}(t)$, as the scores for 
each node have all been shifted and scaled in the same way. Now, 
the $i$th entry of $\boldsymbol\phi(t)$ is given by
\begin{equation}\label{maclaurin}
\phi_i(t) = \frac{1}{c_2t^2}[SC_i(t) -c_0] =  d_i + 
\frac{c_3}{c_2}t [A^3]_{ii} + \frac{c_4}{c_2}t^2[A^4]_{ii} + \cdots ,
\end{equation}
which tends to $d_i$ as $t\to 0+$.  Thus, as $t \to 0+$, 
the rankings produced by the $f$-subgraph centrality scores reduce to 
those produced by the degrees.

Similarly, we have
\begin{equation}\label{maclaurin2}
TC_i(t) = [f(tA) {\bf 1}]_i = [c_0{\bf 1} + c_1 t A{\bf 1} + c_2 t^2 A^2 {\bf 1} + \cdots ]_i
= c_0 + c_1 t d_i + c_2 t^2 [A{\bf d}]_i + \cdots
\end{equation}
Subtracting $c_0$ from $[f(tA) {\bf 1}]_i$ and dividing the result by $c_1 t$
leaves the quantity $d_i + O(t)$, hence for $t\to 0+$ we obtain again degree
centrality.

To prove (ii), consider first the expansion of $SC_i(t)$ in terms of 
the eigenvalues and eigenvectors of $A$: 
$$SC_i(t) = \sum_{k=1}^n f(t\lambda_k) 
q_k(i)^2 = f(t\lambda_1) q_1(i)^2 
+\sum_{k=2}^n f(t\lambda_k)  q_k(i)^2,$$  
where $q_k(i)$ is the $i$th entry of the (normalized) eigenvector 
${\bf q}_k$ of $A$ associated with $\lambda_k$.
Let $\boldsymbol\psi(t) := \frac{1}{f(t\lambda_1)}
{\bf SC}(t)$.  As in the proof of (i), the rankings produced by 
$\boldsymbol\psi(t)$ are the same as those produced by 
${\bf SC}(t)$, since the scores for each node have all been rescaled 
by the same amount.  Next, the $i$th entry of $\boldsymbol\psi(t)$ is 
\begin{equation}\label{diff1}
\psi_i(t) = q_1(i)^2 + \sum_{k=2}^n 
\frac{f(t\lambda_k)}{f(t \lambda_1)} q_k(i)^2.  
\end{equation}
Since $A$ is primitive, we have $\lambda_1 > \lambda_k$ for $2\le k\le n$.
Hence, applying Lemma \ref{Lemma3} with $j=0$ we conclude that
$\psi_i(t) \to q_1(i)^2$ as $t\to t^* -$.  
By the Perron-Frobenius Theorem we can choose ${\bf q}_1 >\bf 0$, hence the rankings 
produced by $q_1(i)^2$ are the same as those produced by 
$q_1(i)$.  Thus, as $t \to t^* -$, the rankings produced by 
the $f$-subgraph centrality scores reduce to those obtained with
eigenvector centrality.

Similarly, we have
\begin{equation}\label{diff2}
TC_i(t) = \sum_{k=1}^n f(t\lambda_k)({\bf q}_k^T{\bf 1})q_k(i)
= f(t\lambda_1)({\bf q}_1^T{\bf 1})q_1(i) + 
\sum_{k=2}^n f(t\lambda_k)({\bf q}_k^T{\bf 1}) 
q_k(i).
\end{equation}
Note that ${\bf q}_1^T{\bf 1} > 0$ since ${\bf q}_1 >\bf 0$. Dividing both sides by 
$f(t\lambda_1) {\bf q}_1^T {\bf 1}$ and taking the limit as $t\to t^* -$ we obtain the
desired result.

Finally, (iii) follows by just replacing $\bf 1$ with $\bf v$ in the
foregoing argument.
\end{proof} 

\vskip 0.03in

By specializing the choice of $f$ to the matrix exponential and 
resolvent, we immediately obtain the following corollary of Theorem
\ref{Thm1}.

\vskip 0.03in

\begin{cor} 
Let $G=(V,E)$ be a connected, undirected, unweighted network with 
adjacency matrix $A$, assumed to be primitive. 
Let $EC_i(\beta) = [{\rm e}^{\beta A}]_{ii}$ and
$RC_i(\alpha) = [(I - \alpha A)^{-1}]_{ii}$ 
be the exponential and resolvent subgraph centralities of node $i$.  
Also, let 
$TC_i(\beta) = [{\rm e}^{\beta A}{\bf 1}]_{i}$ and
$K_i(\alpha) = [(I - \alpha A)^{-1}{\bf 1}]_{i}$ be the total communicability 
and Katz centrality
of node $i$, respectively.
Then, the limits in table \ref{Corollary_1} hold.
Moreover, the limits for $TC_i(\beta)$ and $K_i(\alpha)$ remain the same
if the vector $\bf 1$ is replaced by an arbitrary preference vector ${\bf v} > {\bf 0}$.\\
\label{Cor1}
\end{cor}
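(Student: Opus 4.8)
The plan is to obtain the corollary by directly specializing Theorem \ref{Thm1} to the two concrete functions involved, so that essentially no new argument is required beyond checking that each function satisfies the hypotheses of the theorem and correctly identifying the parameter and the endpoint $t^*$ in each case.

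First I would treat the exponential. Writing $f(z) = {\rm e}^z = \sum_{k=0}^\infty z^k/k!$, the coefficients $c_k = 1/k!$ are strictly positive for every $k \ge 0$ and the series is entire, so $f \in {\cal P}_\infty$ with $R_f = \infty$ and hence $t^* = \infty$. Identifying the parameter $t$ with $\beta$, so that $EC_i(\beta) = [f(\beta A)]_{ii}$ and $TC_i(\beta) = [f(\beta A){\bf 1}]_i$, the hypotheses of Theorem \ref{Thm1} hold, and parts (i) and (ii) yield the degree-centrality ranking as $\beta \to 0+$ and the eigenvector-centrality ranking as $\beta \to \infty$.

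Next I would treat the resolvent. Writing $f(z) = (1-z)^{-1} = \sum_{k=0}^\infty z^k$, the coefficients $c_k = 1$ are again strictly positive, and the radius of convergence is $R_f = 1$. To place $f$ in ${\cal P}^\infty$ one must verify the divergence condition (\ref{cond}), namely $\sum_{k=0}^\infty c_k R_f^k = \sum_{k=0}^\infty 1 = \infty$, which is immediate; thus $f \in {\cal P}^\infty$ with $t^* = R_f/\lambda_1 = 1/\lambda_1$. Identifying $t$ with $\alpha$, so that $RC_i(\alpha) = [f(\alpha A)]_{ii}$ and $K_i(\alpha) = [f(\alpha A){\bf 1}]_i$, parts (i) and (ii) of Theorem \ref{Thm1} again apply and give the degree-centrality ranking as $\alpha \to 0+$ and the eigenvector-centrality ranking as $\alpha \to 1/\lambda_1-$, completing the remaining entries of Table \ref{Corollary_1}.

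Finally, the claim concerning a general preference vector ${\bf v} > {\bf 0}$ is inherited directly from Theorem \ref{Thm1}(iii), which guarantees that the eigenvector limit of $TC_i(t)$ as $t \to t^*-$ persists when ${\bf 1}$ is replaced by any such ${\bf v}$; specializing $f$ to the exponential and the resolvent then delivers the corresponding statements for $TC_i(\beta)$ and $K_i(\alpha)$. Since the whole corollary is obtained by pure specialization, there is no genuine obstacle; the one point deserving a moment's care is confirming the membership $f \in {\cal P}^\infty$ for the resolvent — specifically that the divergence condition (\ref{cond}) holds, which is exactly what makes part (ii) of the theorem applicable at the finite endpoint $t^* = 1/\lambda_1$.
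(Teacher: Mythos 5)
Your proposal is correct and matches the paper's own treatment, which obtains the corollary "immediately" by specializing $f$ to ${\rm e}^z\in{\cal P}_\infty$ and $(1-z)^{-1}\in{\cal P}^\infty$ in Theorem \ref{Thm1}; your explicit check of the divergence condition (\ref{cond}) for the resolvent is exactly the point the paper leaves implicit (it cites the resolvent as the representative example of ${\cal P}^\infty$ when defining that class). No gaps.
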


\begin{table}[t]
 \caption{Limiting behavior of different ranking schemes, undirected case.}
 \label{Corollary_1}
\begin{center}
{
\begin{tabular}{|c| cc |} \hline
 & \multicolumn{2}{c|}{Limiting ranking scheme}\\
\cline{2-3}
Method  & degree  & eigenvector \\
\hline
$RC(\alpha)$, $K(\alpha)$ & $\alpha \to 0+$ & $\alpha \to \frac{1}{\lambda_1}-$ \\
$EC(\beta)$, $TC(\beta)$  & $\beta \to 0+$  & $\beta \to \infty$   \\
\hline
\end{tabular}
}
\end{center}
\end{table}

\begin{remark} \label{WLOG}
{\rm
The restriction to primitive matrices is required
in order to have $\lambda_1 > \lambda_k$ for $k\ne 1$, so 
that Lemma \ref{Lemma3} can be used in the proof of Theorem \ref{Thm1}.
At first sight, this assumption may seem
somewhat restrictive; for instance, bipartite
graphs would be excluded, since they have $\lambda_n = -\lambda_1$. 
In practice, however, there is no loss of generality. Indeed, if 
$A$ is imprimitive we can replace $A$ with the (always primitive)
matrix $A_\varepsilon = (1-\varepsilon)A + \varepsilon I$ with 
$0< \varepsilon < 1$, compute the quantities of interest
using $f(tA_\varepsilon)$, and then let $\varepsilon \to 0$. Note that 
$\rho(A_\varepsilon) = \rho(A)$, hence the radius of convergence is unchanged. 
Also note that for some centrality measures, such as those based on the matrix
exponential, it is not
even necessary to take the limit for $\varepsilon \to 0$. Indeed, we have
${\rm e}^{\beta A_\varepsilon} = 
{\rm e}^{\beta \varepsilon} {\rm e}^{\beta (1-\varepsilon)A}.$
The prefactor ${\rm e}^{\beta \varepsilon}$ is just a scaling that does not affect
the rankings, and ${\rm e}^{\beta (1-\varepsilon)A}$  and ${\rm e}^{\beta A}$ and
have identical limiting behavior for $\beta\to 0$ or $\beta\to \infty$. 
}
\end{remark}

\subsection{Directed networks}\label{sec:dir}
Here we extend our analysis to directed networks. 
The discussion is similar to the one for the undirected case, except that
now we need to distinguish between receive and broadcast centralities.
Also, the Jordan canonical
form must replace the spectral decomposition in the proofs.

\vskip 0.05in

\begin{theorem} \label{Thm2}
Let $G=(V,E)$ be a strongly connected, directed, unweighted network with 
adjacency matrix $A$, 
and let $f\in \cal P$ be defined on the
spectrum of $A$. 
 Let $TC^{b}_i(t) =
[f(tA) {\bf 1}]_{i}$ be the broadcast total
$f$-communicability of node $i$ and ${\bf TC}^b(t)$ be the
corresponding vector of broadcast total $f$-communicabilities.
Furthermore,
let $TC^r_i(t) =  [f(t A^T){\bf 1}]_{i} = [f(tA)^T {\bf 1}]_{i}$ be the
receive total $f$-communicability of node $i$ and ${\bf TC}^r(t)$
be the corresponding vector of receive total $f$-communicabilities.
Then,
\begin{enumerate}[(i)]
\item as $t \to 0+$, the rankings produced by
${\bf TC}^b(t)$ converge to those produced by
the out-degrees of the nodes in the network;
\item as $t \to 0+$, the rankings produced by
${\bf TC}^r(t)$ converge to those produced by
the in-degrees of the nodes in the network;
\item if $f\in {\cal P}_\infty \cup
{\cal P}^\infty$, then as $t \to t^* -$, the rankings produced by
${\bf TC}^b(t)$ converge to those produced by ${\bf x}_1$,
where ${\bf x}_1$ is the dominant right eigenvector of $A$;
\item if $f\in {\cal P}_\infty \cup
{\cal P}^\infty$, then as $t \to t^* -$, the rankings produced by
${\bf TC}^r(t)$ converge to those produced by
${\bf y}_1$, where ${\bf y}_1$ is the dominant left eigenvector of $A$;
\item results (iii) and (iv) still hold if ${\bf 1}$ is replaced by an arbitrary 
preference vector ${\bf v} > {\bf 0}$ in the definitions of
${\bf TC}^b(t)$ and ${\bf TC}^r(t)$.
\\
\end{enumerate}
\label{thm:expTC_dir}
\end{theorem}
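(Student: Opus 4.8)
The plan is to follow the architecture of the proof of Theorem~\ref{Thm1}, but to replace the symmetric spectral decomposition by the Jordan canonical form $A = XJX^{-1}$ of Section~2, in which the simple Perron eigenvalue $\lambda_1$ sits in the leading $1\times 1$ block. Write $X = [{\bf x}_1,\hat X]$ and recall that the first row of $X^{-1}$ is ${\bf y}_1^T$, where ${\bf x}_1 > {\bf 0}$ and ${\bf y}_1 > {\bf 0}$ are the dominant right and left eigenvectors supplied by Perron--Frobenius. Parts (i) and (ii) are handled just as in part (i) of Theorem~\ref{Thm1}: expanding $TC^b_i(t) = c_0 + c_1 t\,[A{\bf 1}]_i + c_2 t^2 [A^2{\bf 1}]_i + \cdots$ and using $[A{\bf 1}]_i = d_i^{out}$, I subtract the common constant $c_0$ and divide by the common positive factor $c_1 t$---both ranking-preserving operations---to obtain $d_i^{out} + O(t)$, which tends to the out-degree as $t\to 0+$. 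Statement (ii) is the same computation with $A^T$ in place of $A$, since $[A^T{\bf 1}]_i = d_i^{in}$.

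The core of the argument is (iii) and (iv). As $J$ is block diagonal, so is $f(tJ)$, and the leading block yields the rank-one term $f(t\lambda_1)\,{\bf x}_1{\bf y}_1^T$, giving
\[
f(tA) = f(t\lambda_1)\,{\bf x}_1{\bf y}_1^T + \hat X\,f(t\hat J)\,\hat Y^T,
\]
where $\hat Y^T$ collects the remaining rows of $X^{-1}$. Applying this to ${\bf 1}$ and dividing by the positive scalar $f(t\lambda_1)({\bf y}_1^T{\bf 1})$ (positive since ${\bf y}_1 > {\bf 0}$) gives the ranking-preserving normalization
\[
\frac{TC^b_i(t)}{f(t\lambda_1)({\bf y}_1^T{\bf 1})} = x_1(i) + \frac{[\hat X\,f(t\hat J)\,\hat Y^T{\bf 1}]_i}{f(t\lambda_1)({\bf y}_1^T{\bf 1})},
\]
so it suffices to show the second term vanishes as $t\to t^*-$. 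This is exactly where Lemma~\ref{Lemma3} is needed in full: the entries of $f(t\hat J)$ are the quantities $t^j f^{(j)}(t\lambda_i)/j!$ produced by the Jordan blocks attached to $\lambda_2,\ldots,\lambda_n$, and Lemma~\ref{Lemma3} guarantees that each of these, divided by $f(t\lambda_1)$, tends to zero for every order $j\ge 0$ and every $i\ne 1$. Hence $f(t\hat J)/f(t\lambda_1)\to 0$ entrywise, and since $\hat X$, $\hat Y^T$ and ${\bf 1}$ are fixed the remainder term vanishes, so the rankings of ${\bf TC}^b(t)$ converge to those of ${\bf x}_1$. Part (iv) is then part (iii) applied to $A^T$, whose dominant right eigenvector is ${\bf y}_1$, and part (v) follows verbatim on replacing ${\bf 1}$ by ${\bf v}$: the only change is that ${\bf y}_1^T{\bf 1}$ and ${\bf x}_1^T{\bf 1}$ become ${\bf y}_1^T{\bf v}$ and ${\bf x}_1^T{\bf v}$, still positive because ${\bf v} > {\bf 0}$.

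I expect the main obstacle to be twofold. First, the off-diagonal entries of the Jordan blocks force control of all the derivative ratios $t^j f^{(j)}(t\lambda_i)/f(t\lambda_1)$, not merely the $j=0$ case used in the undirected proof; this is precisely why Lemma~\ref{Lemma3} was stated for arbitrary $j$. Second, Lemma~\ref{Lemma3} requires the strict inequality $\lambda_1 > |\lambda_i|$ for $i\ne 1$, i.e.\ primitivity, which is not among the hypotheses (strongly connected but possibly periodic digraphs may have several eigenvalues of modulus $\lambda_1$). I would remove this gap by first establishing (iii)--(iv) under primitivity and then invoking the perturbation device of Remark~\ref{WLOG}: the matrix $A_\varepsilon = (1-\varepsilon)A + \varepsilon I$ is primitive, is a polynomial in $A$ and hence shares the eigenvectors ${\bf x}_1,{\bf y}_1$, so one carries out the limit for $A_\varepsilon$ and then lets $\varepsilon\to 0$.
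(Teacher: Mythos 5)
Your proposal is correct and follows essentially the same route as the paper: the paper also isolates the simple Perron eigenvalue via the Jordan-form representation of $f(tA)$ (written there through the spectral projectors $G_k$ and the terms $t^jf^{(j)}(t\lambda_k)/j!$), normalizes by $f(t\lambda_1)({\bf y}_1^T{\bf 1})$, and invokes Lemma~\ref{Lemma3} for general $j$ to kill the remainder, reducing the possibly imprimitive case to the primitive one exactly as in Remark~\ref{WLOG}. Your identification of the two delicate points (the need for all derivative orders $j$, and the primitivity gap) matches the paper's own treatment.
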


\begin{proof}
The proofs of (i) and (ii) are analogous to that 
for $TC_i(t)$
in part (i) of Theorem \ref{Thm1}, keeping in mind that the entries
of $A{\bf 1}$ are the out-degrees and those of $A^T{\bf 1}$ are
the in-degrees of the nodes of $G$.

To prove (iii), observe that if $f$ is defined on the spectrum of $A$,
then
\begin{equation}\label{general}
f(A) = \sum_{k=1}^s \sum_{j=0}^{n_k - 1} \frac{f^{(j)} (\lambda_k)}{j!} (A-\lambda_k I)^j G_k,
\end{equation}
where $s$ is the number of distinct eigenvalues of $A$, $n_k$ is the {\rm index}
of the eigenvalue $\lambda_k$ (that is, the order of the largest Jordan block 
associated with $\lambda_k$
in the Jordan canonical form of $A$), and $G_k$ is the oblique projector
with range $\mathcal{R}(G_k)=\mathcal{N}((A-\lambda_kI)^{n_k})$ and null space
$\mathcal{N}(G_k)=\mathcal{R}((A-\lambda_kI)^{n_k})$; see, e.g., \cite[Sec.~1.2.2]{High}
or \cite[Sec.~7.9]{Meyer00}. 
Using (\ref{general})
and the fact that $\lambda_1$ is
simple by the Perron-Frobenius theorem, we find
$$TC_i^b(t) = f(t\lambda_1) ({\bf y}_1^T{\bf 1}) x_1(i)
+
\sum_{k=2}^s \sum_{j=0}^{n_k-1} \frac{t^j f^{(j)}(t\lambda_k)}{j!}
[(A-\lambda_kI)^j G_k{\bf 1}]_{i}.$$
Noting that ${\bf y}_1^T{\bf 1} > 0$, let $\boldsymbol\psi^b(t) := \frac{1}
{f(t\lambda_1) ({\bf y}_1^T{\bf 1})}{\bf TC}^b(t)$.
The rankings produced by $\boldsymbol\psi^b(t)$ will be the same
as those produced by ${\bf TC}^b(t)$. Now,
the $i$th entry of $\boldsymbol\psi^b(t)$ is
\begin{equation}\label{psi2}
\psi^b_i(t) = x_1(i) + \sum_{k=2}^s \sum_{j=0}^{n_k-1}
\frac{t^j f^{(j)}(t \lambda_k)}{j! f(t\lambda_1)({\bf y}_1^T{\bf 1})}
[(A-\lambda_kI)^j G_k{\bf 1}]_{i}.
\end{equation}
Without loss of generality, we can assume that $\lambda_1 > 
|\lambda_k|$ for $k\ne 1$ (see Remark \ref{WLOG}). 
By Lemma \ref{Lemma3} the second term on the right-hand side of 
(\ref{psi2}) vanishes as $t\to t^*-$, 
and therefore $\psi^b_i(t)\to x_1(i)$; that is,
the rankings given by $\boldsymbol\psi^b(t)$
reduce to those given by the right dominant eigenvector ${\bf x}_1$ of $A$
in the limit $t\to t^*-$.
 
The proof of (iv) is completely analogous to that of (iii).

Finally, the proof of (v) is obtained by replacing $\bf 1$
with $\bf v$ and observing that the argument used to prove (iii)
(and thus (iv))
remains valid.
\end{proof}

\vskip 0.05in

By specializing the choice of $f$ to the matrix exponential and 
resolvent, we immediately obtain the following corollary of Theorem
\ref{Thm2}.\\

\begin{cor}
Let $G=(V,E)$ be a strongly connected, directed, unweighted network with
adjacency matrix $A$. 
Let
$EC_i^b(\beta) = [{\rm e}^{\beta A}{\bf v}]_{i}$ and
$K_i^b(\alpha) = [(I - \alpha A)^{-1}{\bf v}]_{i}$ be the total communicability
and Katz broadcast centrality
of node $i$, respectively. Similarly, let
$EC_i^r(\beta) = [{\rm e}^{\beta A^T}{\bf v}]_{i}$ and
$K_i^r(\alpha) = [(I - \alpha A^T)^{-1}{\bf v}]_{i}$ be the total communicability
and Katz receive centrality of node $i$.
Then, the limits in Table \ref{Corollary_2} hold.\\
Moreover, all these limits remain the same if the vector $\bf 1$ is replaced by
an arbitrary preference vector ${\bf v} > \bf 0$.
\label{Cor2}
\end{cor}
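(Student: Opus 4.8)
The plan is to obtain Corollary~\ref{Cor2} as an immediate specialization of Theorem~\ref{Thm2}: I would identify the two generating functions at play, check that each lies in the appropriate subclass of $\cal P$, and then match the abstract scaling parameter $t$ and its endpoint $t^*$ to the concrete parameters $\beta$ and $\alpha$, reading the four table entries directly off parts (i)--(v) of the theorem.

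First I would verify the class membership. The exponential corresponds to $f(z) = \sum_{k=0}^\infty z^k/k!$, which has strictly positive coefficients $c_k = 1/k! > 0$ and is entire; hence $f\in {\cal P}_\infty$ and the relevant endpoint is $t^* = \infty$. The resolvent corresponds to the geometric series $f(z) = (1-z)^{-1} = \sum_{k=0}^\infty z^k$, with $c_k = 1 > 0$ and radius of convergence $R_f = 1$. Since $\sum_{k=0}^\infty c_k R_f^k = \sum_{k=0}^\infty 1 = \infty$, condition (\ref{cond}) holds, so $f\in {\cal P}^\infty$ with endpoint $t^* = R_f/\lambda_1 = 1/\lambda_1$. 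In both cases $f\in {\cal P}_\infty \cup {\cal P}^\infty$, so all four parts (i)--(iv) of Theorem~\ref{Thm2} are available.

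Next I would read off the limits. Setting $t=\beta$ for the exponential and $t=\alpha$ for the resolvent, the broadcast quantities $EC_i^b(\beta)$ and $K_i^b(\alpha)$ are exactly $TC_i^b(t) = [f(tA){\bf 1}]_i$, while the receive quantities $EC_i^r(\beta)$ and $K_i^r(\alpha)$ are exactly $TC_i^r(t) = [f(tA)^T{\bf 1}]_i$. Parts (i)--(ii) of Theorem~\ref{Thm2} then yield out-degree and in-degree rankings as $t\to 0+$, i.e. as $\beta\to 0+$ or $\alpha\to 0+$; parts (iii)--(iv) yield the right and left dominant eigenvector rankings (${\bf x}_1$ and ${\bf y}_1$) as $t\to t^*-$, i.e. as $\beta\to\infty$ for the exponential and as $\alpha\to 1/\lambda_1-$ for the resolvent. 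These are precisely the entries of Table~\ref{Corollary_2}, and the closing statement about an arbitrary preference vector ${\bf v} > {\bf 0}$ is covered by part (v). I expect no substantive obstacle, since this is a routine instantiation; the only point needing care is that strong connectivity makes $A$ irreducible but not necessarily primitive, so $\lambda_1 > |\lambda_k|$ for $k\ne 1$ may fail. This is exactly the gap already bridged inside Theorem~\ref{Thm2} via the perturbation $A_\varepsilon = (1-\varepsilon)A + \varepsilon I$ of Remark~\ref{WLOG}, and for the exponential the factorization ${\rm e}^{\beta A_\varepsilon} = {\rm e}^{\beta\varepsilon}{\rm e}^{\beta(1-\varepsilon)A}$ shows that even the $\varepsilon\to 0$ limit is unnecessary there, so the corollary follows without further work.
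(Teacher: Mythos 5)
Your proposal is correct and matches the paper exactly: the paper derives Corollary~\ref{Cor2} as an immediate specialization of Theorem~\ref{Thm2}, with the exponential in ${\cal P}_\infty$ ($t^*=\infty$) and the resolvent in ${\cal P}^\infty$ ($t^*=1/\lambda_1$), and the primitivity issue is handled inside the theorem's proof via Remark~\ref{WLOG} just as you note. The only difference is that you spell out the routine class-membership checks that the paper leaves implicit.
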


\begin{table}[t]
 \caption{Limiting behavior of different ranking schemes, directed case.}
 \label{Corollary_2}
\begin{center}
{
\begin{tabular}{|c| cccc |} \hline
 & \multicolumn{4}{c|}{Limiting ranking scheme}\\
\cline{2-5}
Method  & out-degree  & in-degree & right eigenvector & left eigenvector \\
\hline
$K^b(\alpha)$ & $\alpha \to 0+$ &  & $\alpha \to \frac{1}{\lambda_1}-$ &  \\
$K^r(\alpha)$ &              & $\alpha \to 0+$ &  & $\alpha \to \frac{1}{\lambda_1}-$\\
$EC^b(\beta)$ & $\beta \to 0+$  &  & $\beta \to \infty$ &  \\
$EC^r(\beta)$ &  & $\beta \to 0+$  &  & $\beta \to \infty$ \\
\hline
\end{tabular}
}
\end{center}
\end{table}

\vskip 0.05in

This concludes our analysis in the case of simple, strongly connected (di)graphs.

\subsection{Extensions to more general graphs} \label{sec:ext}
So far we have restricted our discussion to unweighted, loopless graphs.
This was done in part for ease of exposition. Indeed, it is easy to see
that all of the results in Theorem \ref{Thm1} and Corollary \ref{Cor1}
remain valid in the case of {\em weighted} undirected
networks if all the weights $a_{ij}$ (with $(i,j)\in E$) are positive and if
we interpret the degree of node $i$ to be the
{\em weighted degree}, i.e., the $i$th row sum $A{\bf 1}$.
The only case that cannot be generalized is that relative to
${\bf SC}(t)$ as $t\to 0+$  in Theorem \ref{Thm1} and, as
a consequence, those relative to $EC_i(\beta)$ and $K_i(\alpha)$ 
as $\beta, \alpha \to 0+$ in Corollary \ref{Cor1}.
The reason for this is that in general it is no longer true that
$[A^2]_{ii} = d_i$, i.e., the diagonal entries of $A^2$ are not generally equal
to the weighted degrees.

Furthermore, all of the results in Theorem \ref{Thm2} and Corollary \ref{Cor2}
remain valid in the case of strongly connected,
{\em weighted} directed networks if we interpret the 
out-degree and in-degree of node $i$ as weighted out- and in-degree,
given by the $i$th row and column sum of $A$, respectively. 

Finally, all the results relative to the limit
$t\to t^*-$ in Theorems \ref{Thm1} and \ref{Thm2} remain valid in the
presence of loops (i.e., if $a_{ii}\ne 0$ for some $i$).
Hence, in particular, all the results in Corollaries
\ref{Cor1} and \ref{Cor2} concerning  the behavior of the various
exponential and resolvent-based centrality measures for $\beta\to \infty$
and $\alpha \to \frac{1}{\lambda_1}-$ remain valid in this case.

\section{The case of PageRank}\label{sec:PR}
In this section we discuss the limiting 
behavior of the PageRank algorithm \cite{Page}, which has a well known
interpretation in terms of random walks on a digraph (see, e.g.,
\cite{Pagerank}). Because of the special structure possessed by
the matrices arising in this method, a somewhat different treatment than
the one developed in the previous section is required. 

Let $G=(V,E)$ be an arbitrary digraph with $|V|=n$ nodes, and
let $A$ be the corresponding adjacency matrix. From $A$ we construct
an irreducible, column-stochastic matrix $P$ as follows.
Let $D$ be the diagonal matrix with entries 
$$d_{ij}=\left\{\begin{array}{ll}
d_i^{out},& \textnormal{ if }\,\, i=j \quad \textnormal{and }\quad d_i^{out}>0,\\
1, & \textnormal{ if }\,\, i=j \quad \textnormal{and } \quad d_i^{out}=0 ,\\
0, & \textnormal{ else. }
\end{array}\right .
$$
Now, let 
\begin{equation}\label{matrix_H}
H=A^TD^{-1}. 
\end{equation}
This matrix may have zero columns, corresponding
to those indices $i$ for which $d_i^{out}=0$; the corresponding nodes
of $G$ are known as {\em dangling nodes}. Let $I$ denote the set
of such indices, and define the vector ${\bf a} = (a_i)$ by
$$ a_i =\left\{\begin{array}{ll}
1,& \textnormal{ if }\,\, i\in I,\\
0, & \textnormal{ else. }
\end{array}\right .
$$
Next, we define the matrix $S$ by 
\begin{equation}\label{matrix_S}
S = H + \frac{1}{n}{\bf 1}\,{\bf a}^T.
\end{equation}
Thus, $S$ is obtained from $H$ by
replacing each zero column of $H$ (if present)
by the column vector $\frac{1}{n}{\bf 1}$.
Note that $S$ is column-stochastic, but could still be
(and very often is) reducible. To obtain an irreducible matrix, we take
$\alpha \in (0,1)$ and construct the ``Google matrix"
\begin{equation}\label{google}
P = \alpha S + (1-\alpha){\bf v}{\bf 1}^T\,,
\end{equation}
where ${\bf v}$ is an arbitrary probability distribution vector (i.e., a
column vector with nonnegative entries summing up to 1). 
The simplest choice for $\bf v$ is the uniform distribution, 
${\bf v} = \frac{1}{n}\bf 1$,
but other choices are possible.
Thus, $P$ is a convex combination of the modified
scaled adjacency matrix $S$ and a rank-one matrix, and is
column-stochastic. 
If every entry in $\bf v$ is strictly positive (${\bf v} > \bf 0$),
$P$ is also positive and therefore acyclic and irreducible.
The Markov chain associated with $P$ is {\em ergodic}: it has
a unique steady-state probability distribution 
vector ${\bf p} = {\bf p}(\alpha) > \bf 0$, given by the dominant 
eigenvector of $P$, normalized so that ${\bf p}^T{\bf 1} = 1$: thus,
$\bf p$ satisfies ${\bf p} = P\,{\bf p}$, or $(I - P)\,{\bf p} = \bf 0$ . 
The vector  $\bf p$ is known as the {\em PageRank vector}, and it can
be used to rank the nodes in the original digraph $G$.
The success of this method in ranking web pages is universally
recognized. It has also been used successfully in many other settings.

The role of the parameter $\alpha$ is to
balance the structure of the underlying digraph with the probability of
choosing a node at random 
(according to the probability distribution $\bf v$)
in the course of a random walk on the graph. Another important consideration
is the rate of convergence to steady-state of the Markov chain: the smaller
is the value of $\alpha$, the faster the convergence. 
In practice, the choice $\alpha = 0.85$
is often recommended.

It was recognized early on that the PageRank vector can also be obtained
by solving a non-homogeneous linear system of equations. In fact, there is more
than one such linear system; see, e.g., \cite[Chapter 7]{Pagerank} and
the references therein.
One possible reformulation of the problem is given by
the linear system
\begin{equation}\label{lin_sys}
(I - \alpha H){\bf x} = {\bf v}, \quad {\bf p} = {\bf x}/({\bf x}^T{\bf 1}).
\end{equation}
For each $\alpha \in (0,1)$, the coefficient matrix in (\ref{lin_sys}) is
a nonsingular $M$-matrix, hence it is invertible with a nonnegative inverse
$(I - \alpha H)^{-1}$.
Note the similarity of this linear system with the one corresponding to
Katz centrality. Using this equivalence, we can easily describe the
limiting behavior of PageRank for $\alpha \to 0+$.

\begin{theorem}\label{thm:PR0}
Let $H$ be the matrix defined in (\ref{matrix_H}), and let ${\bf p}(\alpha)$
be the PageRank vector corresponding 
to a given $\alpha\in (0,1)$. Assume ${\bf v} = \frac{1}{n}{\bf 1}$ in the
definition (\ref{google}) of the Google matrix $P$.
Then,
for $\alpha \to 0+$, the rankings given by ${\bf p}(\alpha)$
converge to those given by the vector 
$H{\bf 1}$, the row sums of $H$ or, equivalently, by the vector $S{\bf 1}$,
the row sums of $S$. 
%
\end{theorem}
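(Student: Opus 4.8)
The plan is to exploit the reformulation (\ref{lin_sys}) of PageRank as a Katz-type linear system and then to argue exactly as in the proof of part (i) of Theorem~\ref{Thm1}. For each $\alpha\in(0,1)$ the coefficient matrix $I-\alpha H$ is a nonsingular $M$-matrix (as noted after (\ref{lin_sys})), so $\alpha\rho(H)<1$ and its inverse is given by the convergent Neumann series $(I-\alpha H)^{-1}=\sum_{k=0}^\infty \alpha^k H^k$. With the uniform choice ${\bf v}=\frac1n{\bf 1}$, solving (\ref{lin_sys}) therefore yields
$${\bf x}(\alpha)=(I-\alpha H)^{-1}\tfrac1n{\bf 1}=\tfrac1n\sum_{k=0}^\infty \alpha^k H^k{\bf 1}=\tfrac1n\bigl({\bf 1}+\alpha H{\bf 1}+\alpha^2 H^2{\bf 1}+\cdots\bigr).$$

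First I would strip away the normalizations that do not affect the rankings. Since ${\bf p}(\alpha)={\bf x}(\alpha)/({\bf x}(\alpha)^T{\bf 1})$ is a positive scalar multiple of ${\bf x}(\alpha)$, the ranking produced by ${\bf p}(\alpha)$ agrees with that produced by $n\,{\bf x}(\alpha)={\bf 1}+\alpha H{\bf 1}+\alpha^2 H^2{\bf 1}+\cdots$. The leading term ${\bf 1}$ is uninformative (it assigns equal scores to every node), so, exactly as in (\ref{maclaurin}), I would subtract it and rescale by the positive factor $1/\alpha$, setting
$${\bf y}(\alpha):=\frac1\alpha\bigl(n\,{\bf x}(\alpha)-{\bf 1}\bigr)=H{\bf 1}+\alpha H^2{\bf 1}+\alpha^2 H^3{\bf 1}+\cdots=H{\bf 1}+O(\alpha).$$
Subtracting the constant vector ${\bf 1}$ and dividing by the positive scalar $\alpha$ are the same order-preserving operation applied uniformly to every entry, so ${\bf y}(\alpha)$ and ${\bf p}(\alpha)$ induce identical rankings. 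As $\alpha\to 0+$ the series for ${\bf y}(\alpha)$ converges entrywise to $H{\bf 1}$, hence whenever $[H{\bf 1}]_i>[H{\bf 1}]_j$ the same strict inequality holds for ${\bf y}(\alpha)$ for all sufficiently small $\alpha$; this is exactly the assertion that the rankings of ${\bf p}(\alpha)$ converge to those of $H{\bf 1}$.

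Finally I would record the equivalence with the row sums of $S$. From the definition (\ref{matrix_S}), $S{\bf 1}=H{\bf 1}+\frac1n{\bf 1}({\bf a}^T{\bf 1})=H{\bf 1}+\frac{|I|}{n}{\bf 1}$, where $|I|$ is the number of dangling nodes. Thus $S{\bf 1}$ differs from $H{\bf 1}$ only by the same constant added to every entry, so $S{\bf 1}$ and $H{\bf 1}$ produce identical rankings, completing the argument.

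I do not expect a serious obstacle: the essential content is the Neumann expansion together with the book-keeping that the only rank-preserving freedom (positive scaling and uniform shift) is precisely what relates ${\bf p}(\alpha)$ to $H{\bf 1}$ and $S{\bf 1}$. The one point that requires a little care — and the closest thing to a subtlety — is the passage from convergence of the scores to convergence of the rankings: as stressed in the text, this must be handled by the entrywise-limit/continuity argument above, rather than by naively setting $\alpha=0$, since at $\alpha=0$ the score vector collapses to the uninformative ${\bf 1}$.
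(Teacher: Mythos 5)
Your argument is correct and follows essentially the same route as the paper's proof: expand $(I-\alpha H)^{-1}{\bf v}$ in a Neumann series, observe that subtracting the uniform vector and dividing by $\alpha$ preserves the ranking and leaves $H{\bf 1}+O(\alpha)$, and note that $S{\bf 1}$ differs from $H{\bf 1}$ by a constant shift. Your added remark on passing from entrywise convergence of scores to convergence of rankings is a harmless elaboration of what the paper leaves implicit.
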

\begin{proof}
Note that for each
$\alpha \in (0,1)$ 
the inverse matrix $(I - \alpha H)^{-1}$ can be
expanded in a Neumann series, hence the unique
solution of (\ref{lin_sys}) can be expressed as
\begin{equation}\label{neumann}
{\bf x} = {\bf v} + \alpha H {\bf v} + \alpha^2 H^2 {\bf v} + \cdots 
\end{equation}
When ${\bf v}=\frac{1}{n} {\bf 1}$, the rankings given by the entries
of $\bf x$ coincide with those given by the entries of $({\bf x} - {\bf v})/\alpha$.
But (\ref{neumann}) implies
$$\lim_{\alpha \to 0+} \frac{{\bf x} - {\bf v} } {\alpha } =  
H {\bf v} = \frac{1}{n} H {\bf 1},$$
showing that the rankings from ${\bf p}(\alpha)$ coincide with those
from the row sums of $H$ in the limit
$\alpha \to 0+$. Finally, $S{\bf 1} = H {\bf 1} + ({\bf a}^T{\bf 1}/n){\bf 1}$,
hence the row sums of $S$ result in the same limit rankings.
\end{proof}

We emphasize that the above result only holds for the case of a uniform
personalization vector $\bf v$. 

\begin{remark}
{\rm 
Since $H$ is a scaled adjacency matrix, each entry of $H{\bf 1}$
is essentially a weighted in-degree; see also the discussion in section 
\ref{sec:ext}.
Hence, we conclude that the structure of the graph $G$ retains considerable
influence on the rankings obtained by PageRank
even for very small $\alpha$, as long as it is nonzero.\footnote{See
the Supplementary Materials to this paper for a numerical illustration
of this statement.}
}
\end{remark}

\begin{remark}
{\rm
The behavior of the PageRank vector for $\alpha \to 1$ (from the left) has 
received a great deal of attention in the literature; see, e.g.,
\cite{Boldi05,Boldi09,Pagerank,Vigna}.
Assume that $\lambda = 1$ is the only eigenvalue of $S$ on the unit circle.
Then it can be shown (see \cite{Meyer74,Vigna}) that for $\alpha \to 1-$, the rankings
obtained with PageRank with initial vector $\bf v$ converge to those given by the 
vector
\begin{equation}\label{GI}
 {\bf x}^* = (I - (I - S)(I - S)^{\sharp}){\bf v},
\end{equation} 
where $(I - S)^{\sharp}$ denotes the group generalized inverse of
$I- S$ (see \cite{CM}).
In the case of a uniform personalization vector ${\bf v} = \frac{1}{n} \bf 1$,
(\ref{GI}) is equivalent to using
the row sums of the matrix $(I - S)(I - S)^{\sharp}$.  
As discussed in detail in \cite{Boldi09}, however, 
using this vector
may lead to rankings that are not very meaningful, since when $G$ is not
strongly connected (which is usually the case in practice), it tends to
give zero scores
to nodes that are arguably the most important.
For this reason, values of $\alpha$ too close to 1 are not recommended.
}
\end{remark}

We conclude this section with a few remarks about another technique,
known as {\em DiffusionRank} \cite{YKL} or {\em Heat Kernel PageRank}
\cite{Chung}. This method is based on the matrix
exponential ${\rm e}^{tP}$, where $P$ is column-stochastic, acyclic
and irreducible. For 
example, $P$ could be the ``Google" matrix constructed
from a digraph $G$ in the manner described above. It is immediate to see that
for all $t>0$
the column sums of ${\rm e}^{tP}$ are all equal to ${\rm e}^{t}$, hence
the scaled matrix ${\rm e}^{-t}{\rm e}^{tP}$ is column-stochastic. Moreover,
its dominant eigenvector is the same as the dominant eigenvector of $P$,
namely, the PageRank vector. It follows from the results found in
section \ref{sec:dir}, and can easily
be shown directly (see, e.g., \cite{YKL}), that the node rankings
obtained using the row sums
of ${\rm e}^{tP}$ tend, for $t\to \infty$, to those given by PageRank. 
Hence, the PageRank vector can be regarded as the equilibrium distribution
of a continuous-time diffusion process on the underlying digraph.

\section{Discussion}
\label{sec:interpretation}
The centrality measures considered in this paper are 
all based on walks on the network.  The degree centrality of a node 
$i$ counts the number of walks of length one starting at $i$ 
(the degree of $i$).  In contrast, the eigenvector centrality of 
node $i$ gives the limit as $k\to \infty$  of the percentage 
of walks of length $k$ which start at node $i$ among all walks
of length $k$ (see \cite[Thm.~2.2.4]{CvRoSi97} and \cite[p.~127]{Ebook11}). 
 Thus, the degree centrality of node $i$ measures 
the {\em local} influence of $i$ and the eigenvector centrality measures 
the {\em global} influence of $i$.  

When a centrality measure associated with an analytic function
$f\in \cal P$ is used, walks of all lenghts are included in
the calculation of centrality scores, and a weight $c_k$ is assigned to the  
walks of length $k$, where $c_k \to 0$ as $k\to \infty$. Hence,
both local and global influence are now taken into account, but with longer walks 
being penalized more heavily than shorter ones. 
The parameter $t$ permits further tuning of the weights;
as $t$ is decreased,
the weights corresponding to larger $k$ decay faster and shorter walks
become more important.  In the limit as
$t\to 0+$, walks of length one (i.e., edges) dominate the centrality scores
and the rankings converge to the degree centrality rankings.  As
$t$ is increased, given a fixed walk length $k$, the
corresponding weight
increases more rapidly than those of shorter walks.  In the limit
as $t \to t^*-$, walks of ``infinite" length dominate and the
centrality rankings converge to those of eigenvector centrality.

Hence, when using
parameterized centrality measures, the parameter $t$ can be regarded 
as a ``knob" that can be used for interpolating, or tuning, 
between rankings based on local influence (short walks) and 
those based on global influence (long walks).  In applications 
where local influence is most important, degree centrality will 
often be difficult to distinguish from any of the parameterized centrality 
measures with $t$ small.  Similarly, when global 
influence is the only important factor, parameterized centrality 
measures with $t\approx t^*$ will often be virtually
indistinguishable from eigenvector centrality. 

Parameterized centrality measures are likely to be
most useful when both local and global 
influence need to be considered in the ranking of nodes in a 
network. In order to achieve this, ``moderate" values of 
$t$ (not too small and not too close to $t^*$) should be used. 

To make this notion more quantitative, however, we need some way to 
estimate how fast the limiting rankings given by degree and
eigenvector centrality are approached for $t\to 0+$
and $t\to t^*-$, respectively. We start by considering the undirected case 
(weights and loops are allowed).
The approach to the eigenvector centrality limit as $t\to t^*-$
depends on the {\em spectral gap} $\lambda_1 - \lambda_2$ of
the adjacency matrix of the network.
This is clearly seen from the fact that the difference between
the various parameterized centrality measures (suitably scaled)
depends on the ratios $\frac{f(t\lambda_k)}{f(t\lambda_1)}$, for
$2\le k\le n$; see
(\ref{diff1}) and (\ref{diff2}). Since a function $f\in \cal P$ is 
strictly increasing with $t$ (when $t>0$), a relatively large spectral gap
implies that each term containing $\frac{f(t\lambda_k)}{f(t\lambda_1)}$
(with $k\ne 1$) will tend rapidly to zero as $t\to t^*-$,
since $f(t\lambda_1)$ will grow much faster than $f(t\lambda_k)$.
For example, in the case of exponential subgraph centrality the
$k=2$ term in the sum contains the factor ${\rm e}^{\beta \lambda_2}
/{\rm e}^{\beta \lambda_1} = {\rm e}^{\beta (\lambda_2 - 
\lambda_1)}$, which decays to zero extremely fast for
$\beta \to \infty$ if $\lambda_1 - \lambda_2$
is ``large", with every other term with $k>2$ going to zero at least as fast.

More generally, when the spectral gap is large, the rankings
obtained using parameterized centrality will converge to those
given by eigenvector centrality more quickly as $t$ increases 
than in the case when the spectral gap is small.  
Thus, in networks with a large enough spectral gap, eigenvector 
centrality may as well be used instead of a measure based on the exponential 
or resolvent of the adjacency matrix.  However, it's not always easy
to tell {\em a priori} when $\lambda_1-\lambda_2$ is ``large enough''; 
some guidelines can be found in \cite{E06b}.
We also note that the tuning parameter $t$ can be
interpreted as a way to artificially widen or shrink the (absolute) gap, thus
giving more or less weight to the dominant eigenvector. 

The situation is rather more involved in the case of directed networks.
Equation (\ref{psi2}) shows that the difference between the
(scaled) parameterized centrality scores and the corresponding
eigenvector centrality scores contains terms of the form
$\frac{t^jf^{(j)}(t\lambda_k)}{j!f(t\lambda_1)}$ (with $0\le j\le n_k -1$,
where $n_k$ is the index of $\lambda_k$),
as well as additional quantities involving powers of $A-\lambda_k I$ and
the oblique projectors $G_k$. Although these terms
vanish as $t\to t^*-$, the spectral gap in this case can only
provide an {\em asymptotic} measure of how rapidly the eigenvector
centrality scores are approached, unless $A$ is nearly
normal.

Next, we turn to the limits as $t\to 0+$.  For brevity, we
limit our discussion to the undirected case. 
From equation (\ref{maclaurin}) we see that for small $t$, the difference
between the (scaled and shifted) $f$-subgraph centrality $\phi_i(t)$
of node $i$
and the degree $d_i$ is dominated by the term $\frac{c_3}{c_2}t[A^3]_{ii}$. Now,
it is well known that the number of triangles (cycles of length 3) that node $i$
participates in is equal to $\Delta_i = \frac{1}{2}[A^3]_{ii}$. It follows that
if a node $i$ participates in a large number of triangles, then
the corresponding centrality score $\phi_i(t)$ can be expected to approach  
the degree centrality score $d_i$ more slowly, for $t\to 0+$,
than a node $j$ that participates in no (or few) such triangles. 

To understand this intuitively, consider two nodes, $i$ and $j$, 
both of which have degree $k$.  Suppose node
$i$ participates in no triangles and node $j$ participates in
${k \choose 2}$ triangles.  That is, $N(i)$, the set of nodes adjacent to
node $i$, is an independent set of
$k$ nodes ({\em independent} means that no edges are present
between the nodes in $N(i)$), while $N(j)$ is a clique (complete
subgraph) of size $k$.  
In terms of local communities, node $i$ is isolated
(does not participate in a local community) while node $j$ sits 
at the center of a dense local community (a
clique of size $k+1$) and only participates in links to other nodes 
within this small, dense subgraph.  Due to this,
whenever $j$ communicates with any of its neighbors, this information 
can quickly be passed among all its
neighbors.  This allows the clique of size $k+1$ to act as a sort of 
``super-node'' where $j$'s local influence depends
greatly on the local influence of this super-node.  That is, even on a 
local level, it is difficult to separate the influence
of $j$ from that of its neighbors.  In contrast, node $i$ does not participate 
in a dense local community and, thus,
its local influence depends more on its immediate neighbors than on the 
neighbors of those neighbors.  Therefore,
local (i.e., small $t$) centrality measures on node $i$ will be more similar to degree 
centrality than those on node $j$.

From a more global perspective, we can expect the degree centrality
limit to be attained more rapidly, for $t\to 0+$, for
networks with low clustering coefficient\footnote{Recall
that the {\em clustering coefficient} of an undirected
network $G=(V,E)$ is defined
as the average of the {\em node clustering coefficients}
$CC(i):=\frac{2\Delta_i}{d_i(d_i -1)}$ over all nodes $i\in V$ of 
degree $d_i\ge 2$. See, e.g., \cite[p.~303]{Brandes}.} than for networks 
with high clustering coefficient (such as social networks).

For the total communicability centrality, on the other hand,
equation (\ref{maclaurin2}) suggests that the rate at which degree
centrality is approached is dictated, for small $t$, by the vector $A{\bf d} = A^2{\bf 1}$.
Hence, if node $i$ has a large number of next-to-nearest neighbors
(i.e., there are many nodes at distance 2 from $i$) then the degree
centrality will be approached more slowly, for $t\to 0+$, than for a
node that has no (or few) such next-to-nearest neighbors.

\section{Related work}
As mentioned in the Introduction,
correlations between the rankings obtained with different centrality measures,
such as degree and eigenvector centrality, have frequently been observed
in the literature.  A few authors have gone beyond this empirical
observation and have proved
rigorous mathematical statements explaining some of these 
correlations in special cases. Here we briefly review these previous
contributions and how they relate to our own.

Bonacich and Lloyd showed in \cite{BL01} that
eigenvector centrality is a limiting case of Katz centrality when 
$\alpha \to \frac{1}{\lambda}-$, but their proof assumes that $A$
is diagonalizable.

A centrality measure closely related to Katz centrality,  known as
(normalized) $\alpha$-centrality, was thoroughly studied in
\cite{GL11}. This measure actually depends on
{\em two} parameters $\alpha$ and $\beta$, and reduces to Katz centrality
when $\alpha = \beta$. The authors of \cite{GL11} show that 
$\alpha$-centrality reduces to degree centrality as $\alpha \to 0+$;
they also show, but only for {\em symmetric} adjacency matrices, that
$\alpha$-centrality reduces to eigenvector centrality for $\alpha \to \frac{1}{\lambda}-$
(a result less general than that about Katz centrality in \cite{BL01}). 

A proof that Katz centrality (with an arbitrary preference vector ${\bf v}$)
reduces to eigenvector centrality as $\alpha \to \frac{1}{\lambda}-$ for 
a general $A$ (that is, without requiring that $A$ be diagonalizable) can be
found in \cite{Vigna}. This proof avoids use of the Jordan canonical form but
makes use of the Drazin inverse, following 
\cite{Meyer74}. Unfortunately this technique is not easily generalized to 
centrality measures based on other matrix functions.

Related results can also be found in \cite{Shimbo}. In this paper,
the authors consider parameter-dependent matrices (``kernels") of the form
$$N_{\gamma}(B) = B\sum_{k=0}^\infty (\gamma B)^k = B(I - \gamma B)^{-1}
\quad {\rm and} \quad
E_{\gamma}(B) = {\rm e}^{\gamma B} = \sum_{k=0}^\infty \frac{(\gamma B)^k}{k!},$$
where $B$ is taken to be either $AA^T$ or $A^TA$, with $A$ the adjacency
matrix of a (directed) citation network. The authors show that 
$$\lim_{\gamma \to \gamma^* -} \left (\frac{1}{\rho (B)} - \gamma \right )
N_{\gamma}(B)={\bf v}{\bf v}^T,\quad \lim_{\gamma \to \infty}
{\rm e}^{-\gamma \rho(B)}E_{\gamma}(B) ={\bf v}{\bf v}^T,$$ 
where $\gamma^* = 1/\rho(B)$ and $\bf v$ is the dominant eigenvector of $B$. 
Noting that $\bf v$ is the hub vector when $B=AA^T$
and the authority vector when $B=A^TA$, the authors observe that the HITS
algorithm \cite{HITS} is a limiting case of the kernel-based algorithms.

Finally, we mention the work by Romance \cite{Romance}.
This paper introduces a general family of centrality measures which
includes as special cases degree centrality, eigenvector centrality, 
PageRank, $\alpha$-centrality (including Katz centrality), and many
others. Among other results, this general framework allows the author to 
explain the strong correlation between degree and eigenvector centrality
observed in certain networks, such as Erd\"os--Renyi graphs.
We emphasize that the unifying framework presented in \cite{Romance}
is quite different from ours.

In conclusion, our analysis allows us
to unify, extend, and complete some partial results
that can be found scattered in the literature concerning the relationship
among different centrality measures. In particular, our treatment covers
a broader class of centrality measures and networks than those considered by
earlier authors. In addition, we provide some rules of thumb
for the choice of parameters when using measures such as Katz and
subgraph centrality (see section \ref{sec:numerical_experiments}).

\section{ Summary of numerical experiments}
\label{sec:numerical_experiments}
In this section we briefly summarize the results of numerical 
experiments aimed at illustrating our theoretical results. A
complete description of the tests performed, inclusive of plots and tables,
can be found in the Supplementary Materials accompanying this paper.

We examined various parameterized centrality measures based on the matrix
exponential and resolvent, including subgraph and total communicability
measures.
Numerical tests were performed on a set of networks from different
application areas (social networks, preotein-protein interaction networks,
computer networks, collaboration networks, a road network, etc.). Both directed and
undirected networks were considered.
The tests were primarily aimed at monitoring the limiting behavior of
the various centrality measures for $\beta \to 0+$, $\beta \to \infty$
for exponential-type measures and for $\alpha \to 0+$, $\alpha \to \frac{1}{\lambda_1}-$
for resolvent-type measures.

Our experiments confirm that the rankings obtained with exponential-type centrality measures
approach quickly those obtained from degree centrality as $\beta$ gets smaller, with
the measure based on the diagonal entries $[{\rm e}^{\beta A}]_{ii}$ 
approaching degree centrality faster, in general, than 
the measure based on $[{\rm e}^{\beta A}{\bf 1}]_{i}$. The tests also
confirm that for networks with large spectral gap, the rankings obtained
by both of these measures
approach those from eigenvector centrality much more quickly, as $\beta$ increases,
than for the networks with small spectral gap. 
These remarks are especially true when only the top ranked nodes are
considered.

Similar considerations apply to resolvent-type centrality measures and
to directed networks. 

Based on our tests, we propose the following rules of thumb when using
exponential and resolvent-type centrality measures. For the matrix exponential 
the parameter $\beta$ should be chosen in the range $[0.5,2]$, with smaller values
used for networks with relatively large spectral gap.  Usings values
of $\beta$ smaller than $0.5$ results in rankings very close to those 
obtained using degree centrality,
and using $\beta > 2$ leads to rankings very close to those obtained using
eigenvector centrality. Since both degree and eigenvector centrality are
cheaper than exponential-based centrality measures, it 
would make little sense to use the matrix exponential with values 
of $\beta$ outside the interval $[0.5,2]$. As a default value, $\beta = 1$
(as originally proposed in \cite{estradarodriguez05})
is a very reasonable choice.

Similarly, resolvent-based centrality measures are most informative when
the parameter $\alpha$ is of the form $\tau/\lambda_1$ with $\tau$ chosen in the 
interval $[0.5, 0.9]$. Outside of this interval, the rankings obtained are
very close to the degree (for $\tau < 0.5$) and eigenvector (for
$\tau > 0.9$) rankings, especially when attention is restricted to the
top ranked nodes. Again, the smaller values should be used when the 
network has a large spectral gap. 

Similar conclusions hold 
for the choice of the damping parameter $\alpha$ used in the
PageRank algorithm, in broad agreement with the results of \cite{Boldi05,Boldi09}.

\section{Conclusions}
\label{sec:conclusions}
We have studied 
a broad family of parameterized network centrality measures
that includes subgraph, total communicability and Katz centrality as
well as degree and eigenvector centrality (which appear as limiting cases
of the others as the parameter approaches certain values).
Our analysis applies (for the most part) to rather general types of networks,
including directed and weighted networks; some of our results also hold
in the presence of loops. A discussion of the limiting
behavior of PageRank was also given, particularly for
small values of the parameter $\alpha$.

Our results
help explain the frequently observed correlations between the degree and
eigenvector centrality rankings on many real-world complex networks,
particulary those exhibiting a large spectral gap,
and why the rankings tend to be most stable 
precisely near the extreme values of the parameters. This is
at first sight surprising, given that
as the parameters approach their
upper bounds, the centrality scores and their derivatives diverge,
indicating extreme sensitivity.

We have discussed the role of network properties, such as the
spectral gap and the clustering coefficient, on the rate
at which the rankings obtained by a parameterized
centrality measure approach those obtained by
the degree and eigenvector centrality in the limit.
We have further shown that the parameter plays the role of 
a ``knob" that can be used to give more or less weight 
to walks of different lengths on the graph. 

In the case of resolvent and
exponential-type centrality measures, we have provided rules of
thumb for the choice of the parameters $\alpha$ and $\beta$.
In particular, we provide guidelines for the choice of
the parameters that produce rankings that
are the most different from the degree and eigenvector centrality
rankings and, therefore, most useful in terms of providing additional information
in the analysis of a given network.  Of course, the larger the
spectral gap, the smaller the range of parameter values leading to
rankings exhibiting a noticeable difference from those obtained
from degree and/or eigenvector centrality. Since degree and
eigenvector centrality are considerably less expensive to compute
compared to subgraph centrality, for
networks with large spectral gap it may be difficult to justify
the use of the more expensive centrality measures discussed
in this paper. 

Finally, in this paper we have mostly avoided discussing computational
aspects of the ranking methods under consideration, focusing 
instead on the theoretical understanding of the relationship
among the various centrality measures. For recent progress
on walk-based centrality computations see, e.g.,
\cite{benziboito,BKlym13,bonchietal,Fenu1,Fenu2}.

\section*{Acknowledgments}
The authors would like to thank Ernesto Estrada and
Shanshuang Yang for valuable discussions. 
We also thank Dianne O'Leary,   
two anonymous referees and the handling editor 
for many useful suggestions. 

\vspace{-0.1in}

\pagebreak

\pagebreak

\appendix
\section{Supplementary materials to the paper}

\begin{abstract}
This document contains details of numerical experiments
performed to illustrate the theoretical results presented
in our accompanying paper.
\end{abstract}



\subsection{Limiting behavior of PageRank for small $\alpha$}
In this section we want to illustrate the behavior of the
PageRank vector in the limit of small values of the parameter $\alpha$.
We take the following example from \cite[pp.~32--33]{Pagerank_supp}.
Consider the simple digraph $G$ with $n=6$ nodes described in
Fig.~\ref{fig:1}. 

\begin{figure}[h]
\centering
\includegraphics[width=0.30\textwidth]{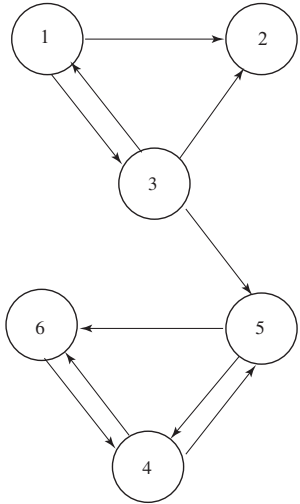}
\caption{A directed network with six nodes.}
\label{fig:1}
\end{figure} 

The adjacency matrix for this network is
$$
A =  \left(\begin{array}{cccccc}
0 &  1 &  1 & 0 & 0 & 0 \\
0 &  0 &  0 & 0 & 0 & 0 \\
1 &  1 &  0 & 0 & 1 & 0 \\
0 &  0 &  0 & 0 & 1 & 1 \\
0 &  0 &  0 & 1 & 0 & 1 \\
0 &  0 &  0 & 1 & 0 & 0 \\
\end{array}\right)\,.
$$

The corresponding matrix $H$ is obtained by transposing $A$ and
normalizing each nonzero column of $A^T$ by the sum of its 
entries:\footnote{It is worth noting that our matrices and vectors are the transposes
of those found in \cite{Pagerank_supp} since we write our proability 
distribution vectors as column vectors rather than row ones.}
$$
H =  \left(\begin{array}{cccccc}
0 &  0 &  1/3 & 0 & 0 & 0 \\
1/2 &  0 &  1/3 & 0 & 0 & 0 \\
1/2 &  0 &  0 & 0 & 0 & 0 \\
0 &  0 &  0 & 0 & 1/2 & 1 \\
0 &  0 &  1/3 & 1/2 & 0 & 0 \\
0 &  0 &  0 & 1/2 & 1/2 & 0 \\
\end{array}\right)\,.
$$
Next, we modify the second column of $H$ in order to have a
column-stochastic matrix:
$$
S =  \left(\begin{array}{cccccc}
0 &  1/6 &  1/3 & 0 & 0 & 0 \\
1/2 &  1/6 &  1/3 & 0 & 0 & 0 \\
1/2 &  1/6 &  0 & 0 & 0 & 0 \\
0 &  1/6 &  0 & 0 & 1/2 & 1 \\
0 &  1/6 &  1/3 & 1/2 & 0 & 0 \\
0 &  1/6 &  0 & 1/2 & 1/2 & 0 \\
\end{array}\right)\,.
$$
Note that $S$ is reducible. Finally, we form the matrix
$$ P = \alpha S + \frac{(1-\alpha)}{6} {\bf 1} {\bf 1}^T, \quad {\rm where}
\quad \alpha \in (0,1).$$ 
This matrix is strictly positive for any $\alpha \in (0,1)$, hence for
each such $\alpha$ there is a unique dominant eigenvector, 
the corresponding PageRank vector.
 
Now we compute the PageRank vector ${\bf p} = {\bf p}(\alpha)$
for different values of $\alpha$, and compare the corresponding
rankings of the nodes of $G$. We begin with $\alpha = 0.9$, the
value used in \cite[p.~39]{Pagerank_supp}. Rounded to five digits, the
corresponding PageRank vector is
$${\bf p}(0.9) =  \left(\begin{array}{cccccc}
.03721 & .05396 & .04151 & .37510 & .20600 & .28620 \\
\end{array}\right)^T\,.
$$
Therefore, the nodes of $G$ are ranked by their importance
as $\left(\begin{array}{cccccc} 4 & 6 & 5 & 2 & 3 & 1 
\end{array}\right)$.

Next we compute the PageRank vector for $\alpha = 0.1$:
$${\bf p}(0.1) =  \left(\begin{array}{cccccc}
.15812 & .16603 & .16067 & .17812 & .16703 & .17002 \\
\end{array}\right)^T\,.
$$
Therefore, the nodes of $G$ are ranked by their importance
as $\left(\begin{array}{cccccc} 4 & 6 & 5 & 2 & 3 & 1
\end{array}\right)$, exactly the same ranking as before.
The scores are now closer to one another (since they are all
approaching the uniform probability $1/6$), but not so close as to
make the ranking impossible, or different than in the case
of $\alpha = 0.9$.

For $\alpha = 0.01$ we find
$${\bf p}(0.01) =  \left(\begin{array}{cccccc}
 .16583 & .16666 & .16610 & .16778 & .16667 & .16695\\
\end{array}\right)^T\,.
$$
Again, we find that the nodes of $G$ are ranked 
as $\left(\begin{array}{cccccc} 4 & 6 & 5 & 2 & 3 & 1
\end{array}\right)$, exactly as before.

Finally, for $\alpha = 0.001$ we find, rounding this time
the results to seven digits:
$${\bf p}(0.001) =  \left(\begin{array}{cccccc}
.1665833 & .1666666 &  .1666111 & .1667778 & .1666667 & .1666945
\\
\end{array}\right)^T\,.
$$
As before, the ranking of the nodes is unchanged.

Clearly, as $\alpha$ gets smaller it becomes more difficult to rank the
nodes, since the corresponding PageRank values get closer and closer
together, and more accuracy is required. For this reason, it is better
to avoid tiny values of $\alpha$. This is especially true for large
graphs, where most of the individual entries of the PageRank vector
are very small.  But the important point
here is that even for very small nonzero values of $\alpha$ the  
underlying graph structure continues to influence the rankings of the 
nodes. Taking values of $\alpha$ close to 1 is probably not necessary
in practice, especially recalling that $\alpha$ values near 1 result in
slow convergence of the PageRank iteration. 

As discussed in the paper (Theorem 6.1), the rankings given by PageRank 
approach those obtained using the vector $H{\bf 1}$ (equivalently,
$S{\bf 1}$) in the limit
as $\alpha\to 0+$.  This vector is
given by
$$H{\bf 1} = \left(\begin{array}{cccccc}
1/3  &  5/6  & 1/2 & 3/2 & 5/6 & 1\\
\end{array}\right)^T\,.
$$
The corresponding ranking is again 
$\left(\begin{array}{cccccc} 4 & 6 & 5 & 2 & 3 & 1
\end{array}\right)$, with nodes 5 and 2 tied in third place. This
is in complete agreement with our analysis.
Moreover, it suggests that an inexpensive alternative to computing the
PageRank vector could be simply taking the row sums of $H$. This
of course amounts to ranking the nodes of the digraph using a
kind of weighted in-degree. 
This ranking scheme is much more crude than PageRank, as
we can see from the fact that it assigns the same score to nodes
2 and 5, whereas PageRank clearly gives higher importance to
node 5 when $\alpha=0.9$.
We make no claims about the usefulness
of this ranking scheme for real directed networks, but given its
low cost it may be worthy
of further study.

\subsection{Numerical experiments on undirected networks}
\label{sec:numerical_experiments_supp}
In this section we present the results of numerical 
experiments aimed at illustrating the limiting behavior
of walk-based, parameterized 
centrality measures using various undirected networks.
We focus our attention on exponential-type and
resolvent-type centrality measures, and study their
relation to degree and eigenvector centrality.

The rankings produced by the various centrality measures are 
compared using the {\em intersection distance} method  
(for more information, see \cite{Faginetal} and 
\cite{Boldi,Randomalpha_supp}).  Given two ranked lists $x$ and
$y$, the top-$k$ 
intersection distance is computed by: 
$${\rm isim}_k(x,y) := \frac{1}{k} 
\sum_{i=1}^k \frac{|x_i \Delta y_i|}{2i}$$ 
where $\Delta$ is the 
symmetric difference operator between the two sets and $x_k$ and 
$y_k$ are the top $k$ items in $x$ and $y$, respectively.  The top-$k$ 
intersection distance gives the average of the normalized symmetric 
differences for the lists of the top $i$ items for all $i \leq k$.  
If the ordering of the top $k$ nodes is the same for the two 
ranking schemes, ${\rm isim}_k(x,y) = 0$.  If the top $k$ are 
disjoint, then ${\rm isim}_k(x,y) = 1$.  Unless otherwise specified, 
we compare the intersection distance for the full set of ranked nodes. 


The networks come from a range of sources, although most can be found 
in the University of Florida Sparse Matrix Collection \cite{UFsparse}.  
The first is the Zachary Karate Club network, which is a classic 
example in network analysis \cite{karate}.  The Intravenous Drug User
 and the Yeast PPI networks were provided by Prof.~Ernesto Estrada 
and are not present in the University of Florida Collection.  
The three Erd\"os networks correspond to various subnetworks of 
the Erd\"os collaboration network and can be found in the Pajek 
group of the UF Collection.  The ca-GrQc and ca-HepTh networks are 
collaboration networks corresponding to the General Relativity and 
High Energy Physics Theory subsections of the arXiv and can be found 
in the SNAP group of the UF Collection.  The as-735 network can also 
be found in the SNAP group and represents the communication network 
of a group of Autonomous Systems on the Internet.  This communication 
was measured over the course of 735 days, between November 8, 1997 
and January 2, 2000.  The final network is the network of Minnesota 
roads and can be found in the Gleich group of the UF Collection.  
Basic data on these networks, including the order $n$, number of 
nonzeros, and the largest two eigenvalues, can be found in 
Table \ref{tbl:basic_data}.  All of the networks, with the exception 
of the Yeast PPI network, are simple.  The Yeast PPI network has 
several ones on the diagonal, representing the self-interaction of 
certain proteins.  All are undirected.

\begin{table}
\centering
\caption{Basic data for the networks used in the experiments.}
\begin{tabular}{|c|c|c|c|c|}
\hline
Graph & $n$ & $nnz$ & $\lambda_1$ & $\lambda_2$ \\
 \hline
 \hline
Zachary Karate Club & 34 & 156 & 6.726 & 4.977\\
 \hline
 Drug User & 616 & 4024 & 18.010 & 14.234\\
 \hline
 Yeast PPI & 2224 & 13218 & 19.486 & 16.134\\
 \hline
Pajek/Erdos971 & 472 & 2628 & 16.710 & 10.199\\
\hline
Pajek/Erdos972 & 5488 & 14170 & 14.448 & 11.886\\
\hline
Pajek/Erdos982 & 5822 & 14750 & 14.819 & 12.005\\
\hline
Pajek/Erdos992 & 6100 & 15030 & 15.131 & 12.092\\
\hline
SNAP/ca-GrQc & 5242 & 28980 & 45.617 & 38.122\\
\hline
SNAP/ca-HepTh & 9877 & 51971 & 31.035 & 23.004\\
\hline
SNAP/as-735 & 7716 & 26467 & 46.893 & 27.823\\
\hline
Gleich/Minnesota & 2642 & 6606 & 3.2324 & 3.2319\\
 \hline
\end{tabular}
\label{tbl:basic_data}
\end{table}

\subsubsection{Exponential subgraph centrality and total communicability}
\label{sec:exp_tests}
We examined the effects of changing $\beta$ on the exponential 
subgraph centrality and total communicability rankings of nodes 
in a variety of undirected real world networks, as well as their 
relation to degree and eigenvector centrality.  
Although the 
only restriction on $\beta$ is that it must be greater than 
zero, there is often an implicit upper limit that may be 
problem-dependent.  For the analysis in this section, we 
impose the following limits: $0.1 \leq \beta \leq 10$.  To examine 
the sensitivity of the exponential subgraph centrality and 
total communicability rankings, we calculate both sets of scores and 
rankings for various choices of $\beta$.  The values of $\beta$ 
tested are: 0.1, 0.5, 1, 2, 5, 8 and 10.  

The rankings produced by  the matrix exponential-based centrality 
measures for all choices of $\beta$ were compared to those produced 
by degree centrality and eigenvector centrality, using the intersection 
distance method described above.   
Plots of the intersection distances for the rankings produced by 
various choices of $\beta$ with those produced by degree or eigenvector 
centrality can be found in Figs.~\ref{fig:real_exp_deg} and 
\ref{fig:real_exp_eig}.  The intersection distances for rankings 
produced by successive choices of $\beta$ can be found in 
Fig.~\ref{fig:real_exp}.

\begin{figure}[t!]
\centering
\includegraphics[width=1\textwidth]{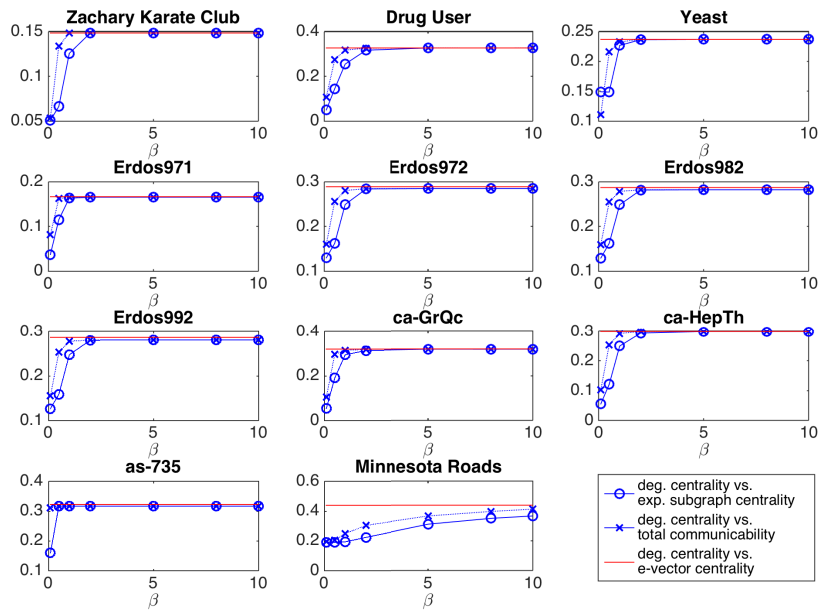}
\caption{The intersection distances between degree centrality 
and the exponential subgraph centrality (blue circles) or total 
communicability (blue crosses) rankings of the nodes in the networks 
in Table \ref{tbl:basic_data}. The red lines are the intersection distances between
degree centrality and eigenvector centrality and are added for reference.}
\label{fig:real_exp_deg}
\end{figure} 

In Figure \ref{fig:real_exp_deg}, the rankings produced by exponential 
subgraph centrality and total communicability are compared to those 
produced by degree centrality.  For small values of $\beta$, both 
sets of rankings based on the matrix exponential are very close to 
those produced by degree centrality (low intersection distances).  
When $\beta =0.1$, the largest intersection distance between the 
degree centrality rankings and the exponential subgraph centrality 
rankings for the networks examined is slightly less than 0.2 (for 
the Minnesota road network).  The largest intersection distance 
between the total communicability rankings with $\beta = 0.1$ and 
the degree centrality rankings is 0.3 (for the as-735 network).  
In general, the (diagonal-based) exponential subgraph centrality 
rankings tend to be slightly closer to the degree rankings than 
the (row sum-based) total communicability rankings for low values 
of $\beta$.  As $\beta$ increases, the intersection distances increase, 
then level off.  The rankings of nodes in networks with a very large 
(relative) spectral gap, such as the karate, Erdos971 and as-735 
networks, stabilize extremely quickly, as expected.  The one exception 
to the stabilization is the intersection distances between the degree 
centrality rankings and exponential subgraph centrality (and total 
communicability rankings) of nodes in the Minnesota road network.  
This is also expected, as the tiny ($<0.001$) spectral gap for the Minnesota road 
network means that it will take longer for the exponential subgraph 
centrality (and total communicability) rankings to stabilize as $\beta$ 
increases. It is worth noting that the Minnesota road network is 
quite different from the other ones: it is (nearly) planar, has
large diameter and a much more regular degree distribution. 

\begin{figure}[t!]
\centering
\includegraphics[width=1\textwidth]{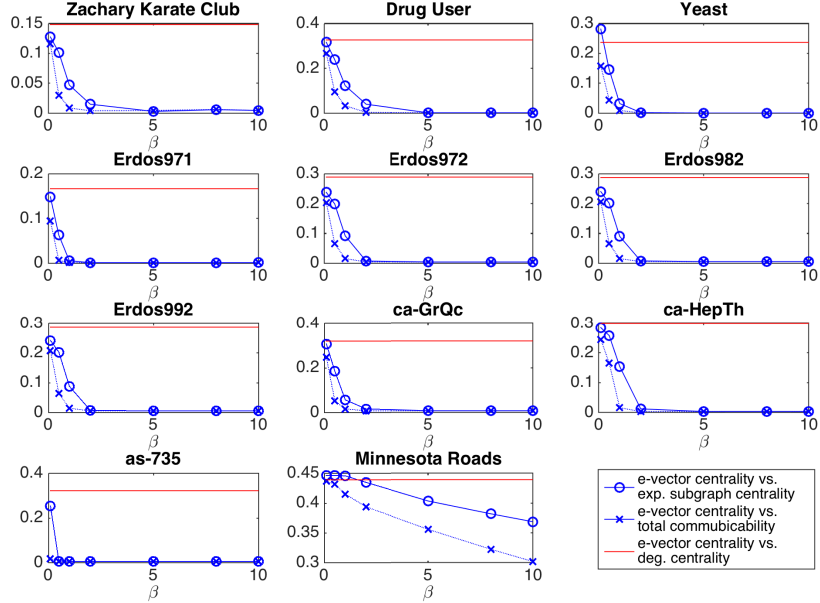}
\caption{The intersection distances between eigenvector centrality and the exponential 
subgraph centrality (blue circles) or total communicability (blue crosses) rankings of the 
nodes in the networks in Table \ref{tbl:basic_data}.  The red lines show the intersection 
distance between eigenvector centrality and degree centrality.}
\label{fig:real_exp_eig}
\end{figure} 

The rankings produced by exponential subgraph centrality and total 
communicability are compared to those produced by eigenvector 
centrality for various values of $\beta$ in Figure \ref{fig:real_exp_eig}.  
When $\beta$ is small, the intersection distances are large but, as 
$\beta$ increases, the intersection distances quickly decrease.  
When $\beta =2$, they are essentially zero for all but one of the 
networks examined.  Again, the outlier is the Minnesota road network.  
For this network, the intersection distances between the 
exponential-based centrality rankings and the eigenvector centrality 
rankings still decrease as $\beta$ increases, but at a much slower 
rate than for the other networks.  This is also expected, inview 
of the very small spectral gap.
Again, the rankings of the nodes in the karate, Erdos971, 
and as-735 networks, which have very large relative spectral gaps, 
stabilize extremely quickly.

\begin{figure}[t!]
\centering
\includegraphics[width=0.48\textwidth]{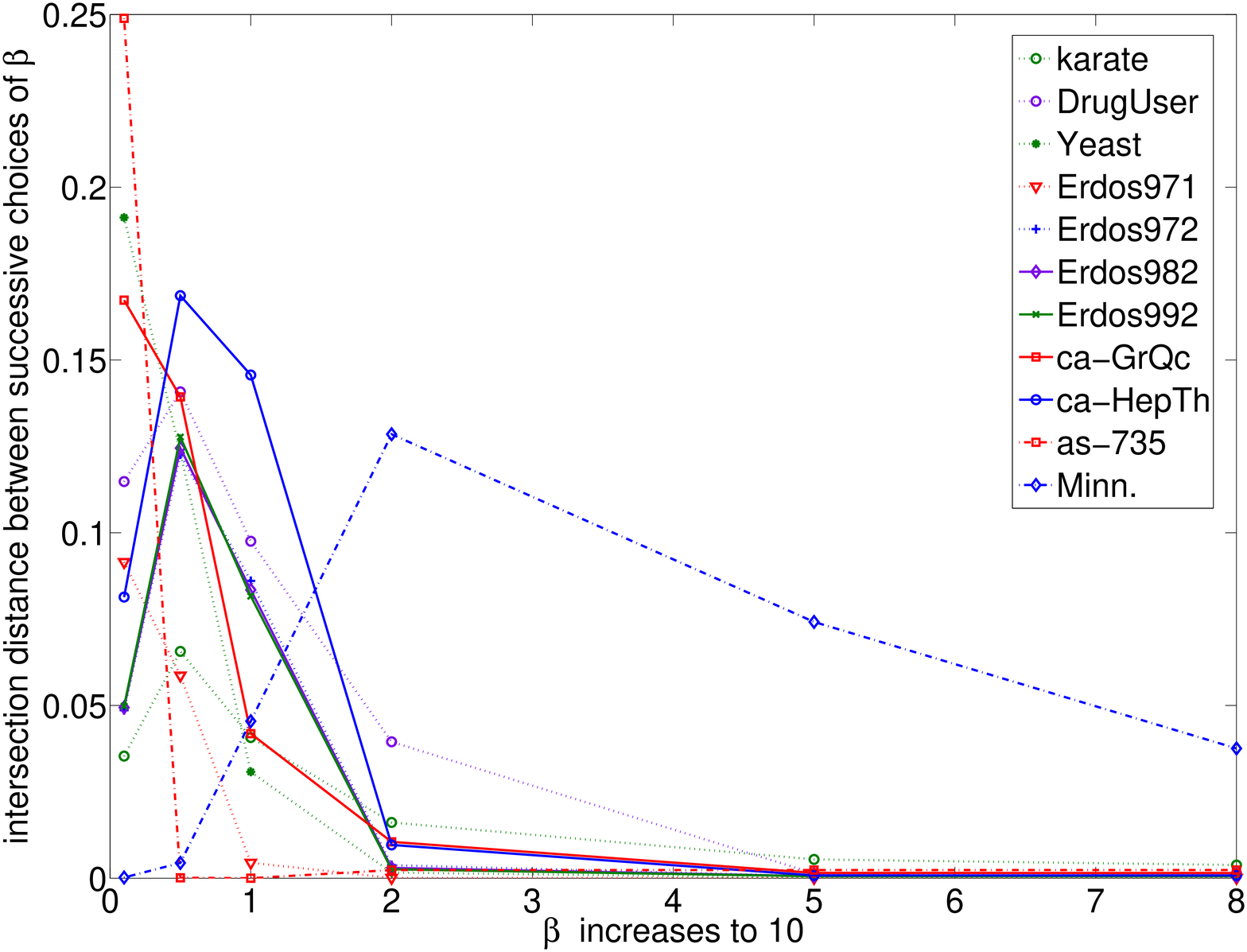}
\hspace{0.1in}
\includegraphics[width=0.48\textwidth]{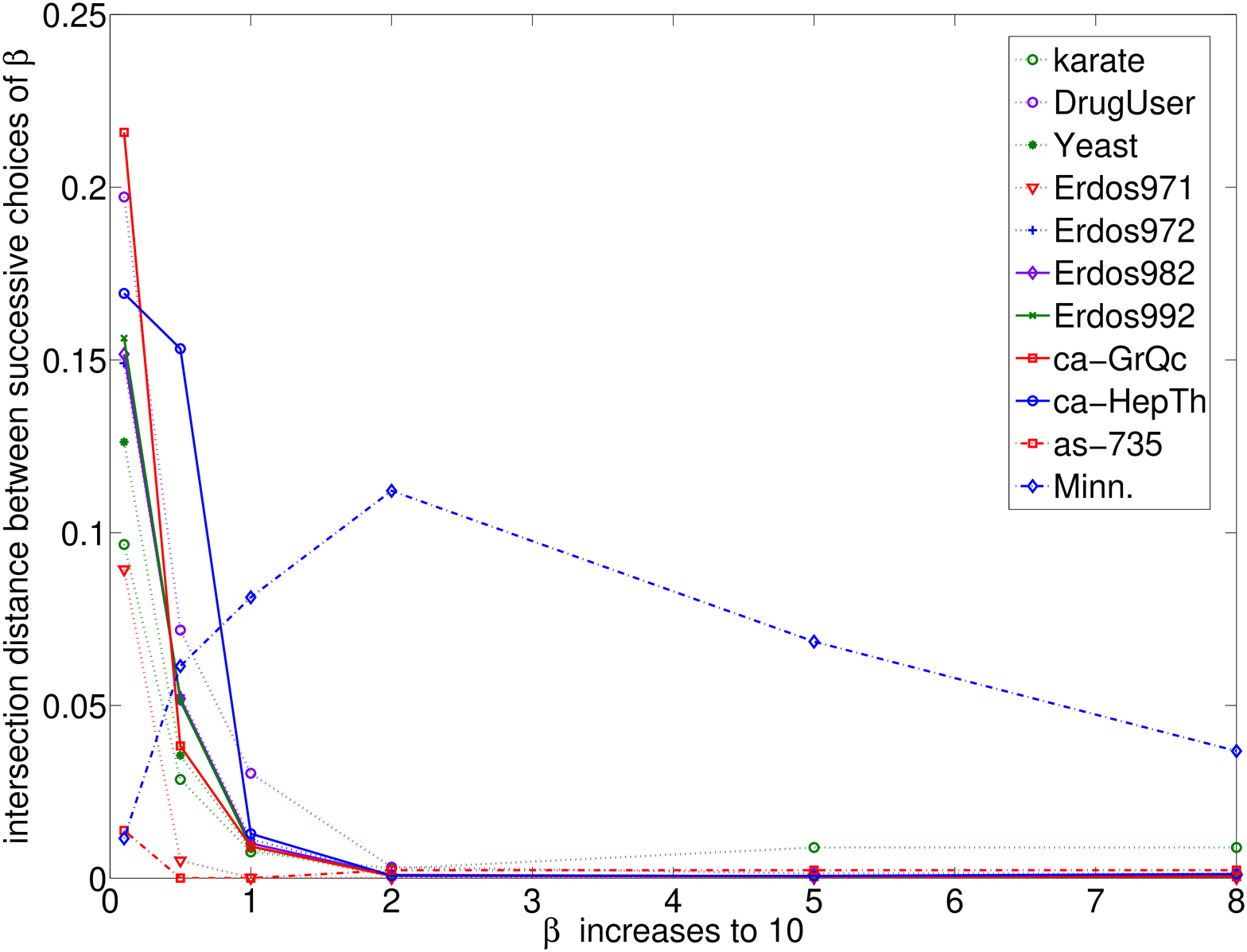}
\caption{The intersection distances between the exponential 
subgraph centrality (left) or total communicability (right) 
rankings produced by successive choices of $\beta$. Each 
line corresponds to a network in Table \ref{tbl:basic_data}.}
\label{fig:real_exp}
\end{figure} 

In Figure \ref{fig:real_exp}, the intersection distances between the 
rankings produced by exponential subgraph centrality and total 
communicability are compared for successive choices of $\beta$.  
Overall, these intersection distances are quite low (the highest is 
0.25 and occurs for the exponential subgraph centrality rankings of 
the as-735 network when $\beta$ increases from 0.1 to 0.5).  For all 
the networks examined, the largest intersection distances between 
successive choices of $\beta$ occur as $\beta$ increases to two.  
For higher values of $\beta$, the intersection distance drops, which 
corresponds to the fact that the rankings are converging to those 
produced by eigenvector centrality.  In general, there is less change 
in the rankings produced by the total communicability scores for 
successive values of $\beta$ than for the rankings produced by the 
exponential subgraph centrality scores.

If the intersection distances are restricted to the top 10 nodes, 
they are even lower.  For the karate, Erdos992, and ca-GrQc networks, 
the intersection distance for the top 10 nodes between successive 
choices of $\beta$ is always less than 0.1.  For the DrugUser, 
Yeast, Erdos971, Erdos982, and ca-HepTh networks, the intersection 
distances are somewhat higher for low values of $\beta$, but by the 
time $\beta =2$, they are all equal to 0 as the rankings have converged 
to those produced by the eigenvector centrality.  For the Erdos972 
network, this occurs slightly more slowly.  The intersection distances 
between the rankings of the top 10 nodes produced by $\beta =2$ and 
$\beta =5$ are 0.033 and for all subsequence choices of $\beta$ are 0.  
In the case of the Minnesota Road network, the intersection distances 
between the top 10 ranked nodes never stabilize to 0, as is expected.
More detailed results and plots can be found in \cite[Appendix B]{Klymko13_supp}.

For the networks examined, when $\beta < 0.5$, the exponential subgraph 
centrality and total communicability rankings are very close to those 
produced by degree centrality.  When $\beta \geq 2$, they are essentially 
identical to the rankings produced by eigenvector centrality.  Thus, 
the most additional information about node rankings (i.e. information 
that is not contained in the degree or eigenvector centrality rankings) 
is obtained when $0.5 < \beta < 2$.  This supports the intuition 
developed in section 5 of the of the accompanying paper that ``moderate" values of 
$\beta$ should be used to gain the most benefit from the use of matrix 
exponential-based centrality rankings.

\subsubsection{Resolvent subgraph and Katz centrality}
\label{sec:Katz_tests}
In this section we investigate the effect of changes in $\alpha$ 
on the resolvent subgraph centrality and Katz centrality in the 
networks listed in Table \ref{tbl:basic_data}, as well as the relationship 
of these centrality measures to degree and eigenvector centrality.  
We calculate the scores and node rankings produced by degree and
eigenvector centrality, as well as those produced by the
resolvent ($RC_i(\alpha)$) and Katz 
($K_i(\alpha)$) centralities for various values of $\alpha$.  The values 
of $\alpha$ tested are given by $\alpha = 0.01\cdot\frac{1}{\lambda_1}$, 
$0.05\cdot\frac{1}{\lambda_1}$, $0.1\cdot\frac{1}{\lambda_1}$, 
$0.25\cdot\frac{1}{\lambda_1}$, $0.5\cdot\frac{1}{\lambda_1}$, 
$0.75\cdot\frac{1}{\lambda_1}$, $0.9\cdot\frac{1}{\lambda_1}$, 
$0.95\cdot\frac{1}{\lambda_1},$ and $0.99\cdot\frac{1}{\lambda_1}$. 

As in section \ref{sec:exp_tests}, the rankings produced by degree 
centrality and eigenvector centrality were compared to those produced 
by resolvent-based centrality measures for all choices of 
$\alpha$ using the intersection distance method.  The results are 
plotted in Figs.~\ref{fig:real_res_deg} and \ref{fig:real_res_eig}.  
The rankings produced by successive choices of $\alpha$ are also 
compared and these intersection distances are plotted in 
Fig.~\ref{fig:real_res}.

\begin{figure}[t!]
\centering
\includegraphics[width=1\textwidth]{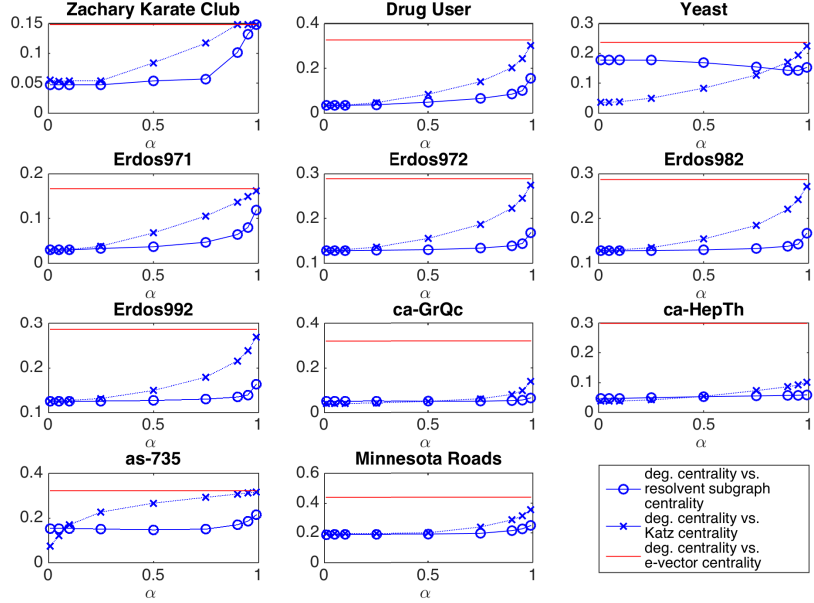}
\caption{The intersection distances between degree centrality and 
the resolvent subgraph centrality (blue circles) or Katz centrality (blue crosses) 
rankings of the nodes in the networks in Table \ref{tbl:basic_data}.  The $x$-axis 
measures $\alpha$ as a percentage of its upper bound, $\frac{1}{\lambda_1}$.  The red lines
show the intersection distances between degree centrality and eigenvector centrality
for each of the networks.}
\label{fig:real_res_deg}
\end{figure} 

Fig.~\ref{fig:real_res_deg} shows the intersection distances between 
the degree centrality rankings and those produced by resolvent subgraph 
centrality or Katz centrality for the values of $\alpha$ tested. 
When $\alpha$ is small, the intersection distances between the 
resolvent-based centrality rankings and the degree centrality 
rankings are low.  For $\alpha = 0.01\cdot\frac{1}{\lambda_1}$, the 
largest intersection distance between the degree centrality rankings 
and the resolvent subgraph centrality rankings is slightly less than 
0.2 (for the Minnesota road network). The largest intersection distance 
between the degree centrality rankings and the Katz centrality rankings 
is also slightly less than 0.2 (again, for the Minnesota road network).  
The relatively large intersection distances for the node rankings on 
the Minnesota road network when $\alpha = 0.01\cdot\frac{1}{\lambda_1}$ 
is due to the fact that with both the degree centrality and the resolvent 
subgraph (or Katz) centrality, there are many nodes with very close scores.
Thus, small changes in the score values (induced by small changes in $\alpha$)
can lead to large changes 
in the rankings.  As $\alpha$ increases towards $\frac{1}{\lambda_1}$, 
the intersection distances increase.  This increase is more rapid 
for the Katz centrality rankings than for the resolvent subgraph 
centrality rankings.

\begin{figure}[t!]
\centering
\includegraphics[width=1\textwidth]{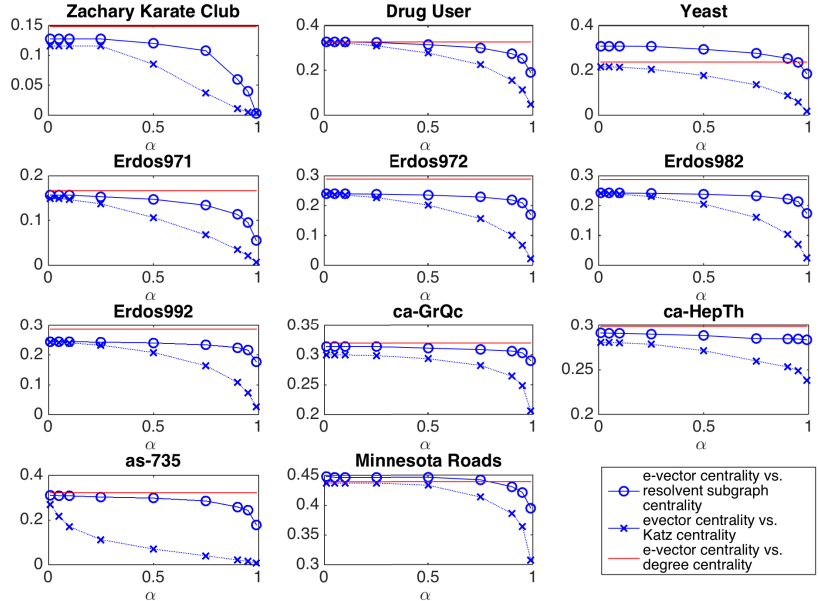}
\caption{The intersection distances between eigenvector centrality 
and the resolvent subgraph centrality (blue circles) or Katz centrality (blue crosses) 
rankings of the nodes in the networks in Table \ref{tbl:basic_data}.  The $x$-axis
measures $\alpha$ as a percentage of its upper bound, $\frac{1}{\lambda_1}$. The 
red reference lines show the intersection distance between eigenvector centrality and 
degree centrality.}
\label{fig:real_res_eig}
\end{figure} 

In Fig.~\ref{fig:real_res_eig}, the resolvent subgraph centrality 
and Katz centrality rankings for various values of $\alpha$ are 
compared to the eigenvector centrality rankings on the networks 
described in Table \ref{tbl:basic_data}.  For small values of $\alpha$, 
the intersection distances tend to be large.  As $\alpha$ increases, 
the intersection distances decrease for both resolvent subgraph 
centrality and Katz centrality on all of the networks examined.  
This decrease is faster for the (row sum-based) Katz centrality 
rankings than for the (diagonal-based) resolvent subgraph centrality 
rankings.  The network with the highest intersection distances between 
the eigenvector centrality rankings and those based on the matrix 
resolvent, and slowest decrease of these intersection distances as 
$\alpha$ increases, is the Minnesota road network.  As was the case 
when matrix exponential-based scores were examined, this is expected 
due to this network's small spectral gap.  The node rankings in 
networks with large relative spectral gaps (karate, Erdos971, 
as-735) converge to the eigenvector centrality rankings most quickly.

\begin{figure}[t!]
\centering
\includegraphics[width=0.48\textwidth]{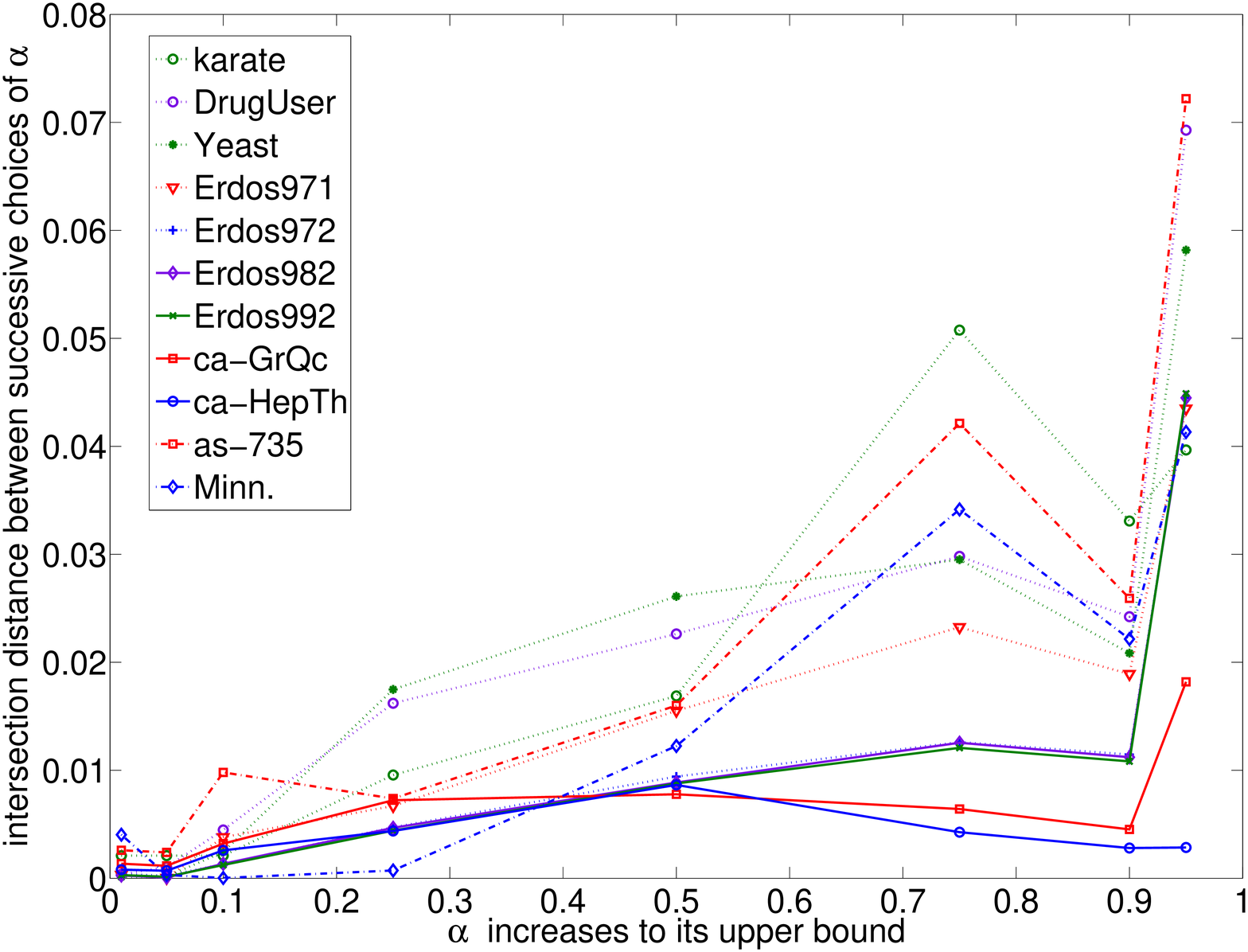}
\hspace{0.1in}
\includegraphics[width=0.48\textwidth]{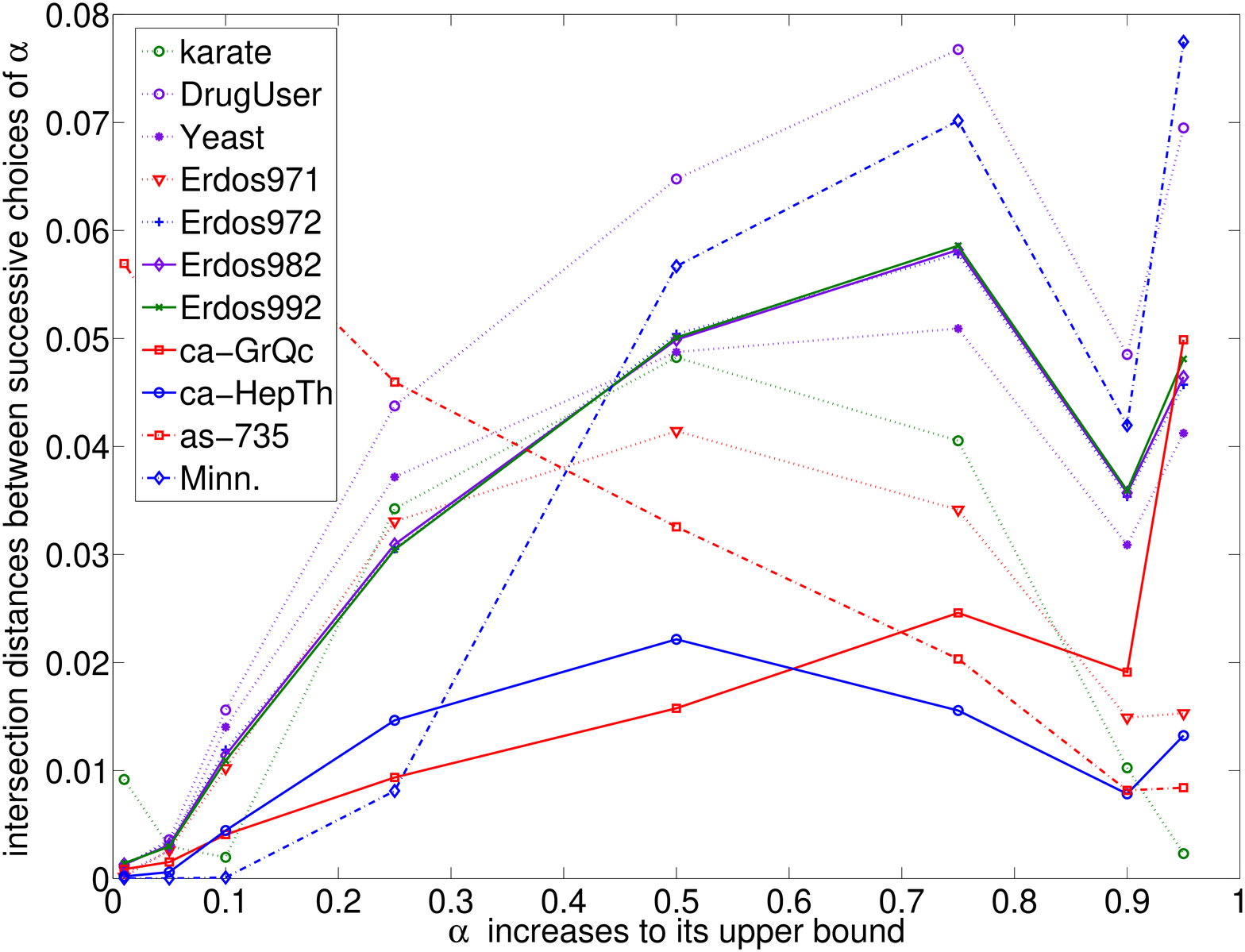}
\caption{The intersection distances between resolvent subgraph 
centrality (left) or Katz centrality (right) rankings produced 
by successive choices of $\alpha$.  Each line corresponds to a
network in Table \ref{tbl:basic_data}.}
\label{fig:real_res}
\end{figure}

The intersection distance between the resolvent subgraph and Katz 
centrality rankings produced by successive choices of $\alpha$ are 
plotted in Fig.~\ref{fig:real_res}.  All of these intersection 
distances are extremely small (the largest is $<0.08$), indicating 
that the rankings do not change much as $\alpha$ increases.  However, 
as $\alpha$ increases, the rankings corresponding to successive 
values of $\alpha$ tend to be slightly less similar to each other.  
The  exception to this is the Katz centrality rankings for the as-735 
network which become more similar as $\alpha$ increases.

Again, if the analysis is restricted to the top 10 nodes, the 
intersection distances between the rankings produced by successive 
choices of $\alpha$ are very small.  For the karate, Erdos971, 
Erdos982, Erdos992, ca-GrQc, and Minnesota road networks, the 
intersection distances between the top 10 ranked nodes for successive 
choices of $\alpha$ are always less than or equal to 0.1 and often 
equal to zero.  For the ca-HepTh network, the top 10 ranked nodes 
are exactly the same for all choices of $\alpha$.  For the DrugUser, 
Yeast, and Erdos972 networks, they are always less than 0.2.
Detailed results can be found in \cite{Klymko13_supp}.

For the eleven networks examined, the resolvent subgraph and Katz 
centrality rankings tend to be close to the degree centrality 
rankings when $\alpha < 0.5\cdot\frac{1}{\lambda_1}$.  
It is interesting to note that as $\alpha$ increases, these rankings 
stay close to the degree centrality rankings until $\alpha$ is 
approximately one half of its upper bound.  Additionally, the  
resolvent based rankings are close to the eigenvector centrality 
rankings when $\alpha > 0.9\cdot\frac{1}{\lambda_1}$.  Thus, the 
most information is gained by using resolvent based centrality 
measures when $0.5 \cdot \frac{1}{\lambda1} \leq \alpha \leq 
0.9\cdot\frac{1}{\lambda_1}$. This supports the intuition from 
section 5 of the accompanying paper that ``moderate" values of $\alpha$ 
provide the most additional information about node ranking beyond 
that provided by degree and eigenvector centrality.   

It is worth noting that similar conclusions have been obtained 
for the choice of the damping parameter $\alpha$ used in the
PageRank algorithm; see \cite{Boldi05_supp,Boldi09_supp}.  

\subsection{Numerical experiments on directed networks}
\label{sec:directed_exp}
In this section, we examine the relationship between the 
exponential and resolvent-based broadcast centrality measures with 
the out-degrees and the dominant right eigenvectors of two real world 
directed networks.  A similar analysis can be done on the relationship 
between the receive centrality measures and the in-degrees and dominant 
left eigenvectors. For the experiments we use two networks from
the University of 
Florida Sparse Matrix Collection \cite{UFsparse}.  As before, 
the rankings are compared 
using the intersection distance method.  The first network we examine 
is wb-cs-Stanford, a network of hyperlinks between the Stanford CS 
webpages in 2001.  It is in the Gleich group of the UF collection.  
The second network is the wiki-Vote network, which is a network of 
who votes for whom in elections for Wikipedia editors to become 
administrators.  It is in the SNAP group of the UF collection. 

Since our theory applies to
strongly connected networks with irreducible adjacency matrices, our 
experiments were performed on the largest strongly connected component 
of the above networks.  Basic data on these strongly connected 
components can be found in Table \ref{tbl:basic_data_dir}.  In both 
of the networks examined, the two largest eigenvalues of the largest 
strongly connected component are real. 
Both networks are simple.

\begin{table}
\centering
\caption{Basic data on the largest strongly connected component 
of the real-world directed networks examined.}
\begin{tabular}{|c|c|c|c|c|}
\hline
Graph & $n$ & $nnz$ & $\lambda_1$ & $\lambda_2$ \\
 \hline
 \hline
Gleich/wb-cs-Stanford & 2759 & 13895 & 35.618 & 12.201 \\
 \hline
 SNAP/wiki-Vote & 1300 & 39456 & 45.145 & 27.573 \\
\hline
\end{tabular}
\label{tbl:basic_data_dir}
\end{table}

\subsubsection{Total communicability}
\label{sec:dir_exp_tests}
As in section \ref{sec:exp_tests}, we examine the effect of 
changing $\beta$ on the broadcast total communicability rankings 
of nodes in the networks, as well as their 
relation to the rankings obtained using the
out-degrees and dominant right eigenvectors of the 
networks. The measures were calculated for the networks described 
in Table \ref{tbl:basic_data_dir}.   To examine the sensitivity of 
the broadcast total communicability rankings, we calculate the scores 
and rankings for various choices of $\beta$.  The values of $\beta$ 
tested are: 0.1, 0.5, 1, 2, 5, 8 and 10.  

The broadcast rankings produced by total communicability for all 
choices of $\beta$ were compared to those produced by the out-degree 
rankings and the rankings produced by ${\bf x}_1$ using the intersection 
distance method as described in section \ref{sec:numerical_experiments_supp}.  
Plots of the intersection distances for the rankings produced by 
various choices of $\beta$ with those produced by the out-degrees 
and right dominant eigenvector can be found in 
Figs.~\ref{fig:real_exp_deg_dir} and \ref{fig:real_exp_eig_dir}. 
The intersection distances for rankings produced by successive 
choices of $\beta$ can be found in Figure \ref{fig:real_exp_dir}.

\begin{figure}[t!]
\centering
\includegraphics[width=1\textwidth]{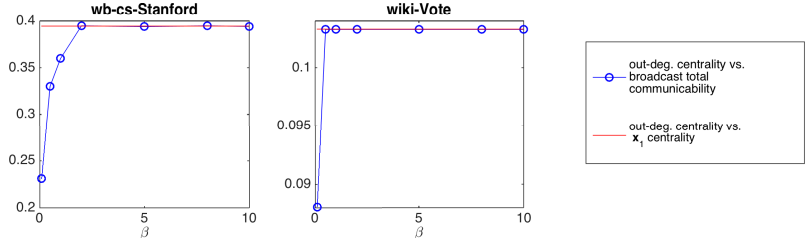}
\caption{The intersection distances between the out-degree rankings 
and the broadcast total communicability rankings (blue circles) of the nodes in 
the networks in Table \ref{tbl:basic_data_dir}.  The red reference line shows the
intersection distance between the out-degree centrality and the ${\bf x}_1$ centrality
rankings.}
\label{fig:real_exp_deg_dir}
\end{figure} 

In Fig.~\ref{fig:real_exp_deg_dir}, the intersection distances 
between the rankings produced by broadcast total communicability 
are compared to those produced by the out-degrees of nodes in the 
network.  As $\beta$ approaches 0, the intersection distances 
decrease for both networks.  As $\beta$ increases to 10, the 
intersection distances initially increase, then stabilize as the 
rankings converge to those produced by ${\bf x}_1$.

\begin{figure}[t!]
\centering
\includegraphics[width=1\textwidth]{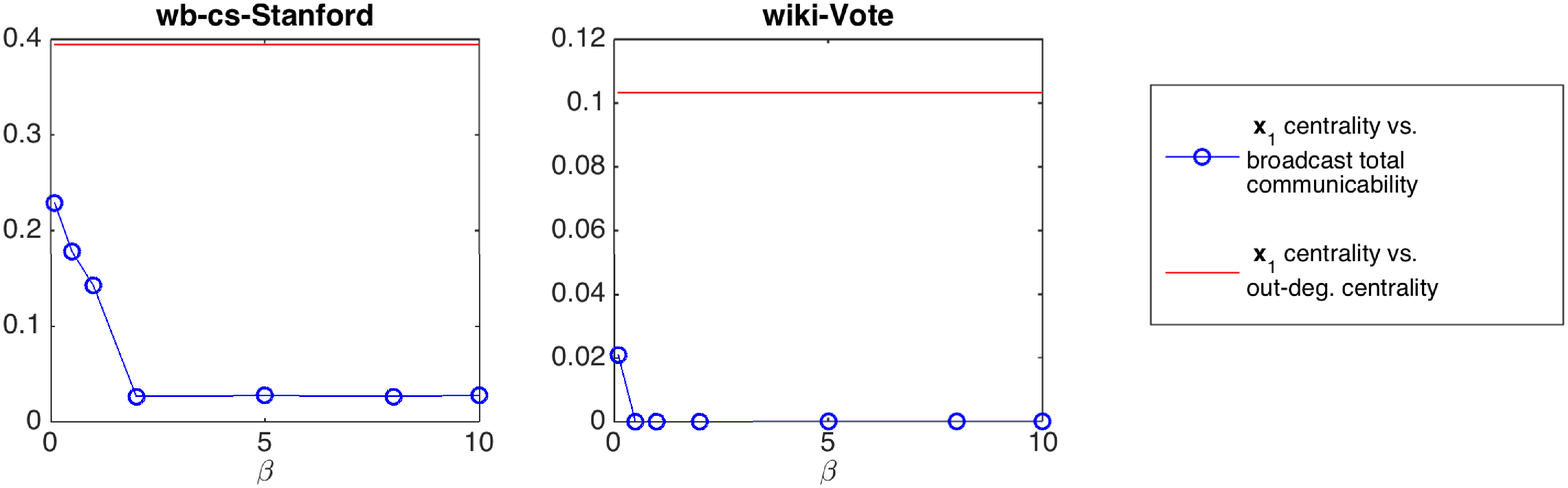}
\caption{The intersection distances (blue circles) between the rankings produced by 
 ${\bf x}_1$ and the broadcast total communicability  rankings of the nodes in the 
 networks in Table \ref{tbl:basic_data_dir}. The red lines show the intersection distances between
 the ${\bf x}_1$ rankings and those produced by the out-degrees.}
\label{fig:real_exp_eig_dir}
\end{figure} 

The intersection distances between the rankings produced by 
broadcast total communicability are compared to those produced by  
${\bf x}_1$ in Figure \ref{fig:real_exp_eig_dir}.  For both networks, 
the intersection distances quickly decrease as $\beta$ increases. 
In the wiki-Vote network, the intersection distances between the 
compared rankings are 0 by the time $\beta = 0.5$.  For the 
wb-cs-Stanford network, by the time $\beta$ has reached five, the 
intersection distances between the broadcast total communicability 
rankings and those produced by ${\bf x}_1$ have decreased to about 
0.04.  The rankings then stabilize at this intersection distance.  
This is due to a group of nodes that have nearly identical total 
communicability scores. 

\begin{figure}[t!]
\centering

\includegraphics[width=1\textwidth]{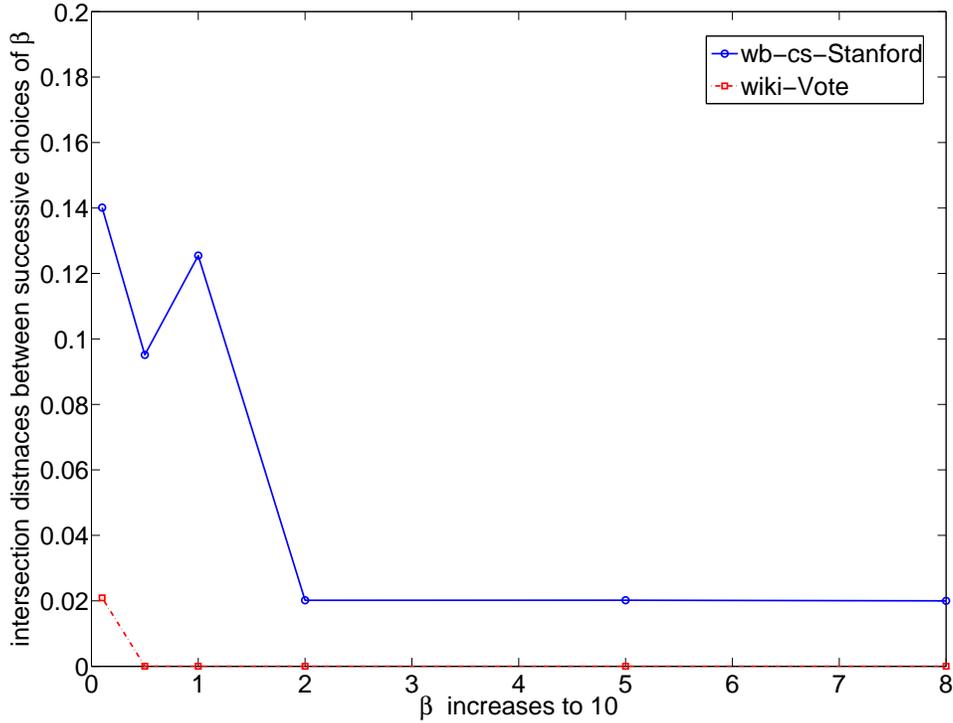}
\caption{The intersection distances between the broadcast total 
communicability rankings produced by successive choices of $\beta$. 
Each line corresponds to a network in Table \ref{tbl:basic_data_dir}.}
\label{fig:real_exp_dir}
\end{figure} 

In Fig.~\ref{fig:real_exp_dir}, the intersection distances 
between the broadcast total communicability rankings for successive 
choices of $\beta$ are plotted.  These intersection distances are 
slightly lower than those observed in the undirected case, with a 
maximum of approximately 0.14, which occurs in the wb-cs-Stanford 
network when $\beta$ increases from 0.01 to 0.05.  By the time
$\beta=0.5$, therankings on the wiki-Vote network have 
stabilized and all subsequent intersection distances are 0.  
For both the broadcast total communicability rankings on the 
wb-cs-Stanford network, the intersection distances decrease 
(non-monotonically) as $\beta$ increases until they stabilize 
at approximately 0.02.

When this analysis is restricted to the top 10 nodes, the 
intersection distances are extremely small.  For the 
wb-cs-Stanford network, the largest intersection distance between 
the top 10 ranked nodes for successive choices of $\beta$ is 0.11  
(when $\beta$ increases from 0.1 to 0.5).  For the wiki-Vote network, 
the intersection distance between the top 10 total communicability 
scores is 0.01 when $\beta$ increases from 0.1 to 0.5, and zero otherwise; 
see \cite[Appendix B]{Klymko13_supp} for detailed results and plots.

The differences between the out-degree rankings and the broadcast 
total communicability rankings are greatest when $\beta \geq 0.5$.  
The differences between the left and right eigenvector based rankings 
and the broadcast rankings are greatest when $\beta < 2$ (although 
in the case of the wiki-Vote network, they have converged by the time 
$\beta = 0.5$).  Thus, like in the case of the undirected networks, 
moderate values of $\beta$ give the most additional ranking information 
beyond that provided by the out-degrees and the left and right eigenvalues.

\subsubsection{Katz centrality}
In this section, we investigate the effect of changes in $\alpha$ 
on the broadcast Katz centrality rankings of nodes in the networks 
listed in Table \ref{tbl:basic_data_dir} and relationship of these 
centrality measures to the rankings produced by the out-degrees and 
the dominant right eigenvectors of the network.  We calculate the 
scores and node rankings produced by $K_i^b(\alpha)$ for various 
values of $\alpha$.  The values of $\alpha$ tested are given by 
$\alpha = 0.01\cdot\frac{1}{\lambda_1}$, $0.05\cdot\frac{1}{\lambda_1}$, 
$0.1\cdot\frac{1}{\lambda_1}$, $0.25\cdot\frac{1}{\lambda_1}$, 
$0.5\cdot\frac{1}{\lambda_1}$, $0.75\cdot\frac{1}{\lambda_1}$, 
$0.9\cdot\frac{1}{\lambda_1}$, $0.95\cdot\frac{1}{\lambda_1}$, 
and $0.99\cdot\frac{1}{\lambda_1}$. 

The rankings produced by the out-degrees and the dominant right 
eigenvectors were compared to those produced by Katz centrality for 
all choices of $\alpha$ using the intersection distance method, as 
was done in Section \ref{sec:dir_exp_tests}.  The results are plotted 
in Figs.~\ref{fig:real_res_deg_dir} and \ref{fig:real_res_eig_dir}.  

\begin{figure}[t!]
\centering
\includegraphics[width=1\textwidth]{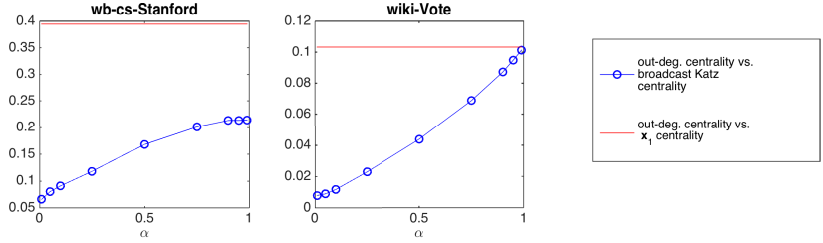}
\caption{The intersection distances (blue circles) between the rankings 
produced by the out-degrees and the broadcast 
Katz centrality rankings of the nodes in the networks in 
Table \ref{tbl:basic_data_dir}.  Here, the $x$-axis shows $\alpha$ as a
percentage of its upper bound, $\frac{1}{\lambda_1}$.  The red reference lines
show the intersection distance between the rankings produced by ${\bf x}_1$ and
those produced by the out-degrees of the nodes.}
\label{fig:real_res_deg_dir}
\end{figure} 

As $\alpha$ increases from $0.01\cdot\frac{1}{\lambda_1}$ to 
$0.99\cdot\frac{1}{\lambda_1}$, the intersection distances between 
the scores produced by the broadcast Katz centralities and the 
out-degrees increase.  When $\alpha$ is small, the broadcast Katz 
centrality rankings are very close to those produced by the out-degrees 
(low intersection distances).  On the wb-cs-Stanford network, when 
$\alpha = 0.01\cdot\frac{1}{\lambda_1}$, the intersection distance 
between the two rankings is approximately 0.06.  On the wiki-Vote 
network, it is approximately 0.01.    As $\alpha$ increases, 
the intersection distances also increase.  By the time 
$\alpha  = 0.99 \cdot \frac{1}{\lambda_1}$, the intersection distance 
between the two sets of node rankings on the wb-cs-Stanford network 
is above 0.2 and on the wiki-Vote network it is approximately 0.1.

\begin{figure}[t!]
\centering
\includegraphics[width=1\textwidth]{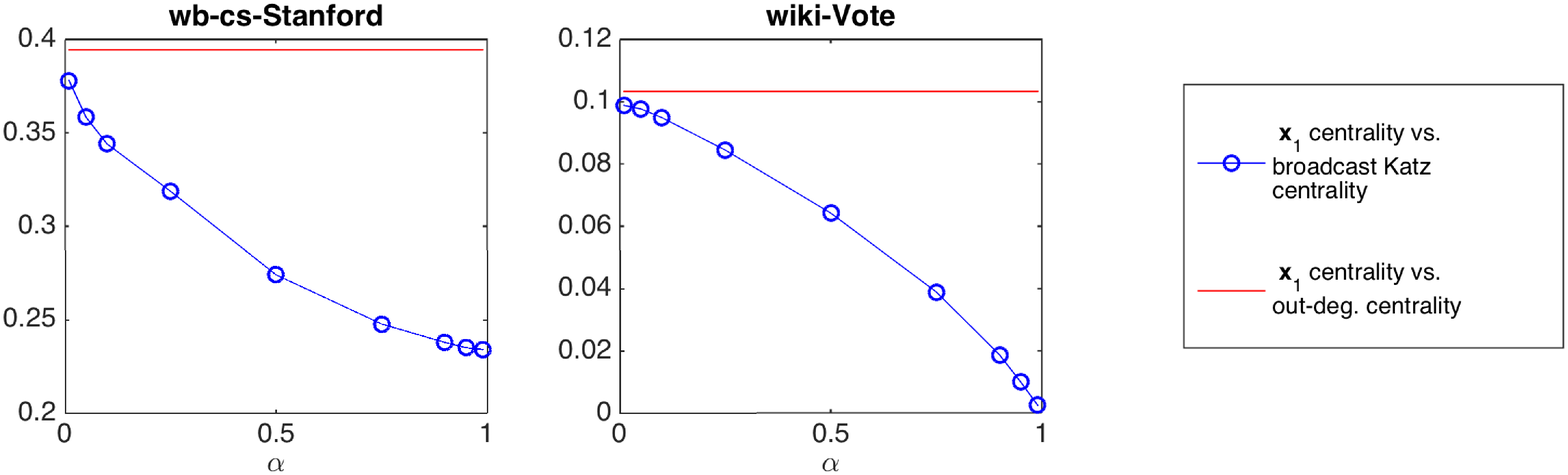}
\caption{The intersection distances (blue circles) between the rankings produced 
by ${\bf x}_1$ and the broadcast Katz centrality rankings 
of the nodes in the networks in Table \ref{tbl:basic_data_dir}.  Here, the $x$-axis shows
$\alpha$ as a percentage of its upper bound, $\frac{1}{\lambda_1}$.  The red lines show the
intersection distance between the rankings produced using ${\bf x}_1$ and the node out-degrees.}
\label{fig:real_res_eig_dir}
\end{figure} 

In Fig.~\ref{fig:real_res_eig_dir}, the rankings produced by 
broadcast Katz centrality are compared to those produced 
by ${\bf x}_1$.  Overall,  The intersection distances between 
the two sets of rankings are lower on the wiki-Vote network 
than they are on the wb-cs-Stanford network.  As $\alpha$ increases 
from $0.01\cdot\frac{1}{\lambda_1}$ to $0.99\cdot\frac{1}{\lambda_1}$, 
the intersection distances between the two sets of rankings on 
the wiki-Vote network decrease from 0.1 to essentially 0.  
On the wb-cs-Stanford network, they decrease from approximately 
0.47 to 0.24.

\begin{figure}[t!]
\centering
\includegraphics[width=0.75\textwidth]{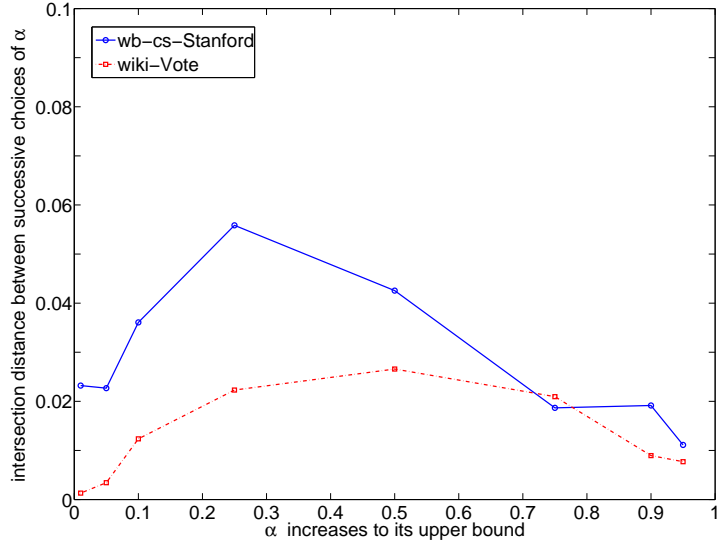}
\caption{The intersection distances between the broadcast 
Katz centrality rankings produced by successive choices of 
$\alpha$. Each line corresponds to a network in Table 
\ref{tbl:basic_data_dir}.}
\label{fig:real_res_dir}
\end{figure} 

The intersection distances between the rankings produced by the 
broadcast Katz centralities for successive values of $\alpha$ are 
plotted in Figure \ref{fig:real_res_dir}.  As was the case for the 
undirected networks, these rankings are more stable in 
regards to the choice of $\alpha$ than the total communicability 
rankings were in regards to the choice of $\beta$.  Here, the maximum 
intersection distance is less than 0.1.  When only the top 10 ranked 
nodes are considered, the intersection distances have a maximum of 0.06 
(on the wb-cs-Stanford network when $\alpha$ increases from 
$0.25\cdot\frac{1}{\lambda_1}$ to $0.5\cdot\frac{1}{\lambda_1}$).  
For both networks, the intersection distances between the rankings on 
the top 10 nodes for successive choices of $\alpha$ are quite small 
(the maximum is 0.18 and the majority are $< 0.1$).

The broadcast Katz centrality rankings are only far from those produced 
by the out-degrees when $\alpha \geq 0.5\cdot\frac{1}{\lambda_1}$.  
They are farthest from those produced by the dominant right eigenvector 
of $A$ when $\alpha < 0.9\cdot\frac{1}{\lambda_1}$.  Thus, as was 
seen in the case of undirected networks, the most additional information 
is gained when moderate values of $\alpha$, $ 0.5\cdot\frac{1}{\lambda_1} 
\leq \alpha <  0.9\cdot\frac{1}{\lambda_1}$, are used to 
calculate the matrix resolvent based centrality scores.  


\vspace{0.1in}

\subsection*{Acknowledgments}
Thanks to Carl Meyer for allowing us to use Fig.~\ref{fig:1} From \cite{Pagerank_supp}.


\begin{thebibliography}{99} 

\bibitem{Linked}
{\sc A.-L.~Barabasi},
{\em Linked: The New Science of Networks},
Perseus, Cambridge, UK, 2002.

\bibitem{BBV08}
{\sc A.~Barrat, M.~Barthelemy, and A.~Vespignani},
{\em Dynamical Processes on Complex Networks},
Cambridge University Press, Cambridge, UK, 2008.

\bibitem{benziboito}
{\sc M.~Benzi and P.~Boito},
\emph{Quadrature rule-based bounds for functions of
adjacency matrices}, Linear Algebra Appl., 433 (2010),
pp.~637--652.

\bibitem{BEK13}{\sc M.~Benzi, E.~Estrada, and C.~Klymko},
{\em Ranking hubs and authorities using matrix functions},
Lin.~Algebra Appl., 438 (2013), pp.~2447--2474.

\bibitem{BKlym13}
{\sc M.~Benzi and C.~Klymko},
{\em Total communicability as a centrality measure},
J.~Complex Networks, 1(2) (2013), pp.~124--149. 

\bibitem{Bocca06}
{\sc S.~Boccaletti, V.~Latora, Y.~Moreno, M.~Chavez, and D.-U.~Hwang},
{\em Complex networks: structure and dynamics}, Phys.~Rep., 424 (2006),
pp.~175--308.


\bibitem{Boldi05}
{\sc P.~Boldi, M.~Santini, and S.~Vigna},
{\em PageRank as a function of the damping factor},
in Proceedings of the 14th
international conference on World Wide Web,
Association for Computing Machinery, New York, NY (2005),
pp.~557--566.

\bibitem{Boldi09}
{\sc P.~Boldi, M.~Santini, and S.~Vigna},
{\em PageRank: Functional dependencies},
ACM Trans.~Inf.~Sys., 27(4) (2009), pp.~1--23.

\bibitem{Bonacich}
{\sc P.~Bonacich},
{\em Power and centrality: a family of measures},
Amer.~J.~Sociology, 92 (1987), pp.~1170--1182.

\bibitem{BL01}
{\sc P.~Bonacich and P.~Lloyd},
{\em Eigenvector-like measures of centrality for asymmetric
relations},
Social Networks, 23 (2001), pp.~191--201.

\bibitem{bonchietal}
{\sc F.~Bonchi, P.~Esfandiar, D.~F.~Gleich, C.~Greif, and L.~V.~S.~Lakshmanan}, 
{\em Fast matrix computations for pair-wise and column-wise commute times and Katz scores}, 
Internet Math., 8 (2012), pp.~73--112.


\bibitem{Br08}
{\sc U.~Brandes},
{\em On variants of shortest-path betweenness centrality 
and their generic computation},
Social Networks, 30 (2008), pp.~136--145.

\bibitem{Brandes}
{\sc U.~Brandes and T.~Erlebach}, eds., 
\emph{Network Analysis: Methodological Foundations}, 
Lecture Notes in Computer Science Vol.~3418, Springer, New York, 2005. 


\bibitem{CM}
{\sc S.~L.~Campbell and C.~D.~Meyer},
\emph{Generalized Inverses of Linear Transformations}, Dover, 1991.

\bibitem{ChKrLaPe11}
{\sc T.~P.~Chartier, E.~Kreutzer, A.~N.~Langville, and K.~E.~Pedings},
{\em Sensitivity and stability of ranking vectors},
SIAM J.~Sci.~Comput., 33 (2011), pp.~1077--1102.

\bibitem{Chung}
{\sc F.~Chung},
{\em The heat kernel as the pagerank of a graph},
Proc.~Nat.~Acad.~Sci., 104 (2007), pp.~19735--19740.

\bibitem{CH10}
{\sc R.~Cohen and S.~Havlin},
\emph{Complex Networks: Structure, Robustness and Function},
Cambridge University Press, Cambridge, UK, 2010.

\bibitem{Randomalpha}
{\sc P.~Constantine and D.~Gleich}
{\em Random Alpha PageRank},
Internet Math., 6 (2009), pp.~199--236.

\bibitem{CvRoSi97}
{\sc D.~Cvetkovi\'c, P.~Rowlinson, and S.~Simi\'c},
{\em Eigenspaces of Graphs},
Cambridge University Press, Cambridge, UK, 1997.


\bibitem{Die00}
{\sc  R.~Diestel},
{\em Graph Theory},
Springer-Verlag, Berlin, 2000.

\bibitem{Dingetal}
{\sc C.~H.~Q.~Ding, H.~Zha, X.~He, P.~Husbands, and H.~D.~Simon},
{\em Link analysis: hubs and authorities on the World Wide Web},
SIAM Rev., 46 (2004), pp.~256--268.

\bibitem{DM03}
{\sc S.~N.~Dorogovtsev and J.~F.~F.~Mendes},
{\em Evolution of Networks: From Biological Nets to the Internet
and WWW},
Oxford University Press, Oxford, UK, 2003.

\bibitem{E06b}
{\sc E.~Estrada},
\emph{Spectral scaling and good expansion properties in complex networks},
Europhys.~Lett., 73 (2006), pp.~649--655.

\bibitem{Ebook11}
{\sc E.~Estrada}, 
\emph{The Structure of Complex Networks}, Oxford University Press, 
Oxford, UK, 2011.




\bibitem{EHB11}
{\sc E.~Estrada, N.~Hatano, and M.~Benzi},
\emph{The physics of communicability in complex networks},
Phys.~Rep., 514 (2012), pp.~89--119.

\bibitem{NetworkProp}
{\sc E.~Estrada and D.~J.~Higham}, 
{\em Network properties revealed through matrix functions}, 
SIAM Rev., 52 (2010), pp.~671--696.

\bibitem{estradarodriguez05}
{\sc E.~Estrada and J.~A.~Rodr\'iguez-Vel\'azquez},
\emph{Subgraph centrality in complex networks}, Phys.~Rev.~E,
71 (2005), 056103.


\bibitem{Fenu1}
{\sc C.~Fenu, D.~Martin, L.~Reichel, and G.~Rodriguez},
\emph{Network analysis via partial spectral factorization
and Gauss quadrature},
SIAM J.~Sci.~Comput., 35(4) (2013), pp.~A2046--A2068.

\bibitem{Fenu2}
{\sc C.~Fenu, D.~Martin, L.~Reichel, and G.~Rodriguez},
\emph{Block Gauss and anti-Gauss quadrature with application
 to networks},
SIAM J.~Matrix Anal.~Appl., 34(4) (2013), pp.~1655--1684.

\bibitem{Fr77}
{\sc L.~Freeman},
{\em A set of measures of centrality based on betweenness},
Sociometry,  40 (1977), pp.~35--41.

\bibitem{Fr79}
{\sc L.~Freeman},
{\em  Centrality in networks. I: Conceptual clarification},
Social Networks, 1 (1979), 215--239. 

\bibitem{FR}
{\sc J.~R.~Furlan Ronqui and G.~Travieso},
{\em Analyzing complex networks through correlations in
centrality measurements},
arXiv:1405.7724v2 [physics.soc-ph], 2 June 2014.

\bibitem{GL11}
{\sc R.~Ghosh and K.~Lerman},
{\em Parameterized centrality metric for network analysis},
Phys.~Rev.~E, 83 (2011), 066118.


\bibitem{High}
{\sc N.~J.~Higham},
\emph{Functions of Matrices. Theory and Computation}, Society for
Industrial and Applied Mathematics, Philadelphia, PA, 2008.

\bibitem{HJ}
{\sc R.~A.~Horn and C.~R.~Johnson},
\emph{Matrix Analysis. Second Edition}, Cambridge University Press,
Cambridge, UK, 2013.


\bibitem{Katz}
{\sc L.~Katz},
\emph{A new status index derived from sociometric data analysis},
Psychometrika, 18 (1953), pp.~39--43.

\bibitem{HITS}
{\sc J.~Kleinberg}, 
{\em Authoritative sources in a hyperlinked environment}, 
J.~ACM, 46 (1999), pp.~604--632.

\bibitem{Klymko13}
{\sc C.~F.~Klymko},
{\em Centrality and Communicability Measures in Complex
Networks: Analysis and Algorithms}, PhD thesis, Emory
University, Atlanta, GA, 2013.

\bibitem{Pagerank}
{\sc A.~N.~Langville and C.~D.~Meyer},
{\em Google's PageRank and Beyond: The Science of Search Engine Rankings},
Princeton University Press, Princeton, NJ, 2006.

\bibitem{Whos}
{\sc A.~N.~Langville and C.~D.~Meyer},
{\em Who's $\# 1$? The Science of Rating and Ranking}, 
Princeton University Press, Princeton, NJ, 2012.

\bibitem{Lee}
{\sc C.-Y.~Lee},
{\em Correlations among centrality measures in complex networks},
arXiv:physics/0605220v1, 25 May 2006.

\bibitem{MH87}
{\sc J.~E.~Marsden and I.~Hoffman},
{\em Basic Complex Analysis. Fourth Edition}, W.~H.~Freeman and
Co., New York, 1987.

\bibitem{Meyer74}
{\sc C.~D.~Meyer},
{\em Limits and the index of a square matrix}, 
SIAM J.~Appl.~Math., 26 (1974), pp.~469--478.

\bibitem{Meyer00}
{\sc C.~D.~Meyer},
{\em Matrix Analysis and Applied Linear Algebra},
Society for Industrial and Applied Mathematics, Philadelphia, 2000.

\bibitem{MiPapa}
{\sc M.~Mihail and C.~Papadimitriou},
{\em On the eigenvalue power law},
in J.~D.~P.~Rolim and S.~Vadhan (Eds.), {\em Proceedings of
RANDOM 2002}, Lectures Notes in Computer Science, 2483 (2002),
pp.~254--262.



\bibitem{New03}
{\sc M.~E.~J.~Newman},
{\em The structure and function of complex networks},
SIAM Rev., 45 (2003), pp.~167--256.

\bibitem{New10}
{\sc M.~E.~J.~Newman},
{\em Networks: An Introduction},
Oxford University Press, Oxford, UK, 2010.

\bibitem{Page}
{\sc L.~Page, S.~Brin, R.~Motwani, and T.~Winograd},
\emph{The PageRank Citation Ranking: Bringing Order to the Web}, 
Tech.~Report,
Stanford Digital Libraries Technology Project, 1998.


\bibitem{REG07}
{\sc J.~A.~Rodr\' iguez, E.~Estrada, and A.~Gut\' ierrez},
{\em Functional centrality in graphs},
Linear Multilinear Algebra, 55 (2007), pp.~293--302.

\bibitem{Romance}
{\sc M.~Romance},
{\em Local estimates for eigenvector-like centralities of
complex networks}, J.~Comput.~Appl.~Math., 235 (2011), 
pp.~1868--1874.

\newpage

\bibitem{Shimbo}
{\sc M.~Shimbo, T.~Ito, D.~Mochihashi, and Y.~Matsumoto},
{\em On the properties of von Neumann kernels for link analysis},
Mach.~Learn., 75 (2009), pp.~37--67.


\bibitem{Vigna}
{\sc S.~Vigna},
{\em Spectral ranking},
arXiv:0912.0238v13, 8 November 2013.

\bibitem{YKL}
{\sc H.~Yang, I.~King, and M.~R. Lyu},
\emph{DiffusionRank: A possible penicillin for Web spamming},
SIGIR 2007 Proceedings, ACM 978-1-59593-597-7/07/0007.
Association for Computing Machinery, 2007.



\end{thebibliography}

\begin{thebibliography}{99} 

\bibitem{Boldi}
{\sc P.~Boldi},
{\em TotalRank: Ranking Without Damping},
in Proceedings of WWW'05, Special interest tracks and posters of the 14th 
international conference on World Wide Web, 
Association for Computing Machinery, New York, NY (2005),
pp.~898--899. 

\bibitem{Boldi05_supp}
{\sc P.~Boldi, M.~Santini, and S.~Vigna},
{\em PageRank as a function of the damping factor},
in Proceedings of the 14th
international conference on World Wide Web,
Association for Computing Machinery, New York, NY (2005),
pp.~557--566.

\bibitem{Boldi09_supp}
{\sc P.~Boldi, M.~Santini, and S.~Vigna},
{\em PageRank: Functional dependencies},
ACM Trans.~Inf.~Sys., 27(4) (2009), pp.~1--23.

\bibitem{Randomalpha_supp}
{\sc P.~Constantine and D.~Gleich}
{\em Random Alpha PageRank},
Internet Math., 6 (2009), pp.~199--236.

\bibitem{UFsparse}
{\sc T.~Davis and Y.~Hu},
\emph{The University of Florida Sparse Matrix Collection},
\texttt{http://www.cise.ufl.edu/research/sparse/matrices/}

\bibitem{Faginetal}
{\sc R.~Fagin, R.~Kumar, and D.~Sivakumar},
\emph{Comparing top $k$ lists},
SIAM J.~Discr.~Math., 17 (2003), pp.~134--160.

\bibitem{Klymko13_supp}
{\sc C.~F.~Klymko},
{\em Centrality and Communicability Measures in Complex
Networks: Analysis and Algorithms}, PhD thesis, Emory
University, Atlanta, GA, 2013.

\bibitem{Pagerank_supp}
{\sc A.~N.~Langville and C.~D.~Meyer},
{\em Google's PageRank and Beyond: The Science of Search Engine Rankings},
Princeton University Press, Princeton, NJ, 2006.

\bibitem{karate}
{\sc W.~W.~Zachary}, 
{\em An information flow model for conflict and fission in small groups}, 
J.~Anthrop.~Res., 33 (1977), pp.~452--473.

\end{thebibliography}
\end{document}